\newtheorem*{theorem*}{Theorem}
\newtheorem*{lemma*}{Lemma}
\newtheorem{theorem}{Theorem}
\newtheorem{lemma}{Lemma}[section]
\newtheorem{remark}[lemma]{Remark}
\newtheorem{claim}[lemma]{Claim}
\newcommand{\RR}{\mathbb R}
\newcommand{\C}{c}
\def\vp{\varphi}
\def\s0{{ s_0}}
\def\ts0{{\tilde s_0}}
\def\eq#1{(\ref{#1})}
\def\nn{\nonumber}
\def\({\left(\begin{array}{cccccc}}
\def\){\end{array}\right)}
\def\bes{\begin{eqnarray}}
\def\ees{\end{eqnarray}}
\newcommand{\beq}{\begin{equation}}
\newcommand{\eeq}{\end{equation}}
\newcommand{\bea}{\begin{eqnarray}}
\newcommand{\eea}{\end{eqnarray}}
\newcommand{\baln}{\begin{align}}
\newcommand{\ealn}{\end{align}}
\newcommand{\beann}{\begin{eqnarray*}}
\newcommand{\eeann}{\end{eqnarray*}}
\newcommand{\iu}[1]{u^{(#1)}}
\newcommand{\ip}[1]{p^{(#1)}}
\newcommand{\iq}[1]{q^{(#1)}}
\newcommand{\ith}[1]{\theta^{(#1)}}
\newcommand{\iv}[1]{v^{(#1)}}
\def\OAC{\overline{OAC}}
\def\OAB{\overline{OAB}}
\def\ABC{\overline{ABC}}
\def\AC{\overline{AC}}
\def\AB{\overline{AB}}
\def\OA{\overline{OA}}
\def\OB{\overline{OB}}
\newcommand{\pf}{\begin{proof}}
\newcommand{\foorp}{\end{proof}}
\newcommand{\nquad}{\negthickspace\negthickspace
\negthickspace\negthickspace}
\begin{document}

\title{A mixed boundary value problem for $u_{xy}=f(x,y,u,u_x,u_y)$}
\author{Helge Kristian Jenssen }\address{ H.~K.~Jenssen, Department of
Mathematics, Penn State University,
University Park, State College, PA 16802, USA ({\tt
jenssen@math.psu.edu}).}
\author{Irina A.\ Kogan}\address{I.~A.~Kogan, Department of Mathematics, North Carolina State University,
    Raleigh,  NC, 27695-8205, USA \texttt{(iakogan@ncsu.edu)}}
\date{\today}

\

\begin{abstract}
Consider a single hyperbolic PDE $u_{xy}=f(x,y,u,u_x,u_y)$, with locally prescribed data: $u$ along a non-characteristic curve $M$ and $u_x$ along a non-characteristic curve $N$. We assume that $M$ and $N$ are graphs of one-to-one functions, intersecting only at the origin, and located in the first quadrant of the $(x,y)$-plane. 

It is known that if $M$ is located above $N$, then there is a unique local solution, obtainable by successive approximation.  We show that in the opposite case, when  $M$ lies below $N$, the uniqueness can fail in the following strong sense: for the same boundary data, there are two solutions that differ at points arbitrarily close to the origin.

In the latter case, we also establish existence of a local solution (under a Lipschitz condition on the function $f$). The construction, via Picard iteration, makes use of a careful choice of additional $u$-data
which are updated in each iteration step.
\end{abstract}

\maketitle

\noindent {\bf Keywords}: Second order hyperbolic partial differential equations, mixed problems,
non-uniqueness.

\noindent {\bf MSC 2010:} 35L10, 35L20, 35A02.
\section{Introduction}\label{intro}
We consider existence and uniqueness for a certain 
type of boundary value problem for 
the second order wave equation
\beq\label{eqn}
	u_{xy}=f(x,y,u,u_x,u_y).
\eeq
(All quantities and variables are real valued.) 
Our main goal is to 
draw attention to an issue related to uniqueness 
of solutions when the data prescribe the unknown itself along 
one non-characteristic curve, together with one of its partial 
derivatives, $u_x$ say, along a different non-characteristic curve.
The curves are assumed to be located in the first quadrant,
both passing through the origin, but otherwise disjoint.
We shall see that,  depending on the relative position of the data curves, 
uniqueness may fail. We also show how a non-standard Picard iteration 
scheme yields existence of a local solution. 
For this we join the two given data 
curves by a third one, along which we iteratively prescribe the 
values of the solution 
itself.

Our motivation stems from our earlier work \cite{bjk} which provided 
a generalization of an integrability theorem due to Darboux.
Darboux's original results concerned (possibly overdetermined) 
systems of first order PDEs where each equation contains a single
partial derivative, for which it is solved, and where the data for the unknowns 
are prescribed locally along certain affine subspaces.  In \cite{bjk} we generalized 
Darboux's results to cases where the data are given along more general 
manifolds and where partial derivatives are replaced by differentiation
along vector fields. However, to establish existence of a unique local solution,
we found it necessary to assume a certain {\em Stable Configuration 
Condition} (SCC). This is a geometric condition on the relative location 
of the data manifolds for the unknowns. As such it has nothing to do 
with over-determinacy or nonlinearity of the PDE system under consideration.

Therefore, to investigate the necessity of the SCC it is reasonable to consider 
a transparent case which highlights its relevance. For this the simplest setting appears 
to be a linear system of two, fully coupled, equations, such as $u_x=v$ and $v_y=u$, which
yields the  equation $u_{xy}=u$. In this case, the SCC puts a restriction on the 
relative location of the data manifolds $M$ and $N$ for $u$ and $v=u_x$, respectively. 

Given the extensive literature on second order hyperbolic PDEs (see Section 
\ref{review} for a 
selective review), our expectation was that, either the SCC was not actually
necessary, 
or it was ``well-known'' that a condition like the SCC is indeed required to 
establish well-posedness in this situation. 

However, we have not been able to find a treatment of this 
issue in the literature. In the present work we show that the SCC is indeed 
required for the uniqueness of a local solution by providing an example of 
non-uniqueness for the equation $u_{xy}=u$ when the SCC is not met. 
We then establish an existence result for more 
general (possibly nonlinear) second order hyperbolic equations.

In subsection \ref{issue} we explain in more detail the issue 
at hand. Section \ref{review} reviews related works. In Section \ref{exist_uxy=u} 
we establish existence via successive approximations for the model 
equation $u_{xy}=u$ in the case when the SCC fails. For the same 
case, we show in Section \ref{non_uniqueness} that uniqueness 
can fail. As the proof of existence in Section \ref{exist_uxy=u} involves a 
choice of additional data assignment along a certain curve 
(located away from the origin), 
the non-uniqueness result in Section \ref{non_uniqueness} may appear 
unsurprising. However, we show that the additional data assignment 
influences the solution all the way to the origin.
Finally, in Section \ref{local_existence} we apply the same technique
(additional data assignment) to show existence  of a solution to \eq{eqn} in the case when the SCC fails.
However, the additional data assignment must now be  done adaptively at each step in the 
iteration scheme.

\subsection{The issue}\label{issue} To highlight the issue, consider 
the following simple situation where both
data curves are straight lines through the origin: 
\[M:=\{x=a y\}\qquad N:=\{y=b x\},\]
where $a>0$ and $b>0$ are constants. We assume that $u$ and 
$u_x$ are given along parts of $M$ and $N$, respectively, in the first 
quadrant:
\beq\label{data1}
	u(ay,y)=\vp(y) \qquad 0\leq y\leq y_A,
\eeq
and
\beq\label{data2}
	u_x(x,bx)=\psi(x) \qquad 0\leq x\leq x_B.
\eeq

\begin{figure}
	\centering
	\includegraphics[width=13cm,height=6cm]{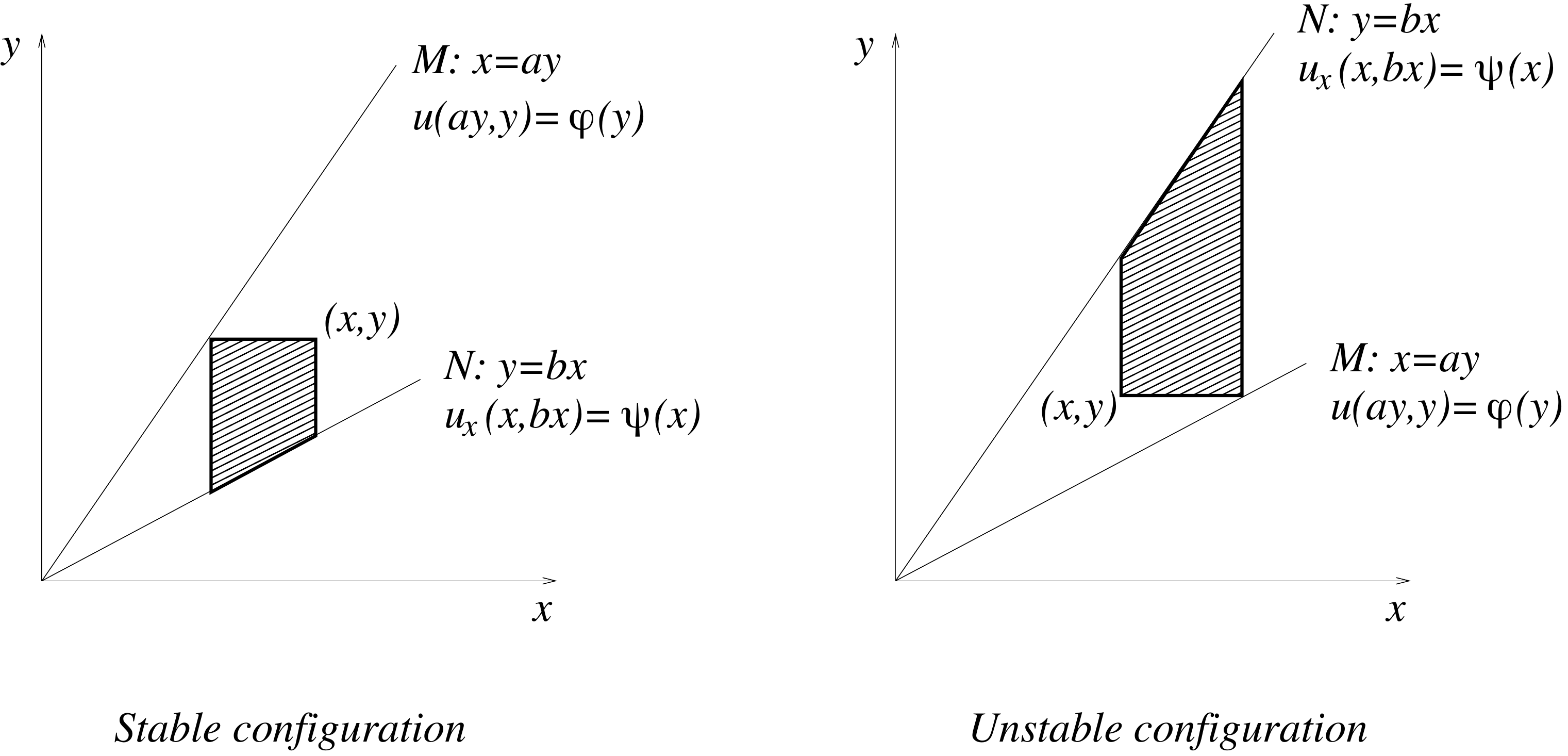}
	\caption{Stable and unstable configurations.}
	\label{Figure_cases}
\end{figure} 

\noindent 
Note that the data are only prescribed locally (the natural setting 
for nonlinear equations). 
Now consider the two cases,
\begin{itemize}
	\item[(I)] $ab<1$: the data curve $N$ for $u_x$ lies 
	below the data curve $M$ for $u$;
	\item[(II)] $ab>1$: vice versa;
\end{itemize}
see Figure~\ref{Figure_cases}.
Already for the elementary case where the right-hand side of \eq{eqn}
is independent of $(u,u_x,u_y)$, i.e., $f=f(x,y)$, there is a notable difference in
how the value of $u$ is determined at a point $(x,y)$ located between 
$M$ and $N$. In either case we first integrate with 
respect to $y$, exploiting the equation and the given $u_x$-data, 
followed by an $x$-integration to  exploit the $u$-data, obtaining:
\begin{align}
	\nn u(x,y)&=\vp(y)+\int_{ay}^x\left(\psi(\xi)\, 
	+\int_{b\xi}^yf(\xi,\eta)\, d\eta\right) d\xi\\
	&=\vp(y)+\int_{ay}^x\psi(\xi)\, d\xi
	+\int_{ay}^x\int_{b\xi}^yf(\xi,\eta)\, d\eta d\xi.\label{eq-itegration}
\end{align}
However, in Case (I),  the upper limits of integrations, given by  the 
coordinates of the point  $(x,y)$,  are larger than the  lower limits,  
and so all points in the  integration region  are closer to the origin than 
the point $(x,y)$ (shaded region in left part of Figure \ref{Figure_cases}).  
In contrast, for Case (II),  the upper limits of integrations, 
given by  the coordinates of the point  $(x,y)$,  are smaller than the  
lower limits. Thus we need to know $f$, $\phi$ and $\psi$ at points 
located \emph{further away from the origin than $(x,y)$}
in order to determine $u(x,y)$ (right part of Figure \ref{Figure_cases}).

This difference between the two cases is harmless  when  the right-hand side of \eq{eqn}
is independent of $(u,u_x,u_y)$: we obtain a 
(local) solution in either case. However, if we instead have a nontrivial
equation such as
\beq\label{lin}
	u_{xy}=u,
\eeq
a standard approach would apply Picard iteration where 
the $n$th iterate $u^{(n)}$ defines the next iterate $u^{(n+1)}$ from the equation
\[u_{xy}^{(n+1)}=u^{(n)}(x,y),\]
together with the original data requirements along $M$ and $N$. That is, 
$u^{(n)}(x,y)$ now plays the role of $f(x,y)$ above. It is clear that Case (II)
will, in a finite number of iterations, require data values that have not 
been assigned if the data are only prescribed locally. 

In contrast, it can be shown (see \cite{bjk}) that in Case (I) the iteration scheme 
outlined above will converge to a unique, local solution to \eq{lin}. We thus have a situation 
where the natural iteration scheme is well-defined and converges provided 
the data curves are located in a certain manner, while the same iteration 
scheme is undefined when this is not the case.
We express this by saying that the {\em Stable Configuration Condition} (SCC)
is satisfied in Case (I), while it is violated in Case (II).

As a remedy for Case (II) we shall fix a bounded set whose boundary 
consists of one part of each of $M$ and $N$, together with a new, chosen 
curve joining these, along which we prescribe values of $u$.
(These new values are subject to some mild compatibility conditions.)
For concreteness we choose the new curve to be a vertical line segment. 
In Section \ref{exist_uxy=u} we use this approach to establish local existence 
in Case (II) for the model equation $u_{xy}=u$ with general boundary data 
\eq{data1}-\eq{data2}.

However, we also establish non-uniqueness. More precisely, 
fix a triangle $\OAB$ as in Figure~\ref{Figure_uxy=u_data} below, and let $u$ and $u_x$ be prescribed 
to vanish along $\OA$ and $\OB$, respectively. 
Clearly, $u\equiv 0$ is a solution in this case. On the other hand, by 
prescribing a non-trivial function along the vertical segment $\AB$, we show
that there exist a non-trivial solution to the same problem 
which takes nonzero values at points arbitrarily close to the origin.
I.e., the values that we ``artificially'' assign to $u$ along $\AB$, propagate
inward and influence the solution arbitrarily close to the origin.

In Section \ref{local_existence}, we use a similar iteration scheme to  prove the existence of the solution  to a  general,
nonlinear equation of the form \eq{eqn} with a boundary data prescribed in the non-stable   configuration  case.
The crucial difference  with the linear case  treated in Section
\ref{exist_uxy=u},  is that additional  $u$-data along $\AB$ has to be chosen  adaptively as a part of the iteration scheme.
The local convergence  is of the scheme is  proven under Lipschitz conditions on $f$.

Before starting the analysis, we include a selective review of 
the substantial literature on boundary value problems for 
second order hyperbolic equations of the type \eq{eqn}.

\section{Review of related results}\label{review}
It is an understatement to say that the literature on hyperbolic 
second order PDEs of the form \eq{eqn} is extensive. Yet, to the
best of our knowledge, the particular boundary problem described
above has not been resolved when the relative location of the 
data curves is as in Case (II).

For linear equations, {\em viz.}
\beq\label{linear}
	u_{xy}=\alpha u_x+\beta u_y+\gamma u+\delta,
\eeq
where the coefficients $\alpha$-$\delta$ are functions of $(x,y)$,
the consideration of various types of boundary value problems 
dates, at least, back to Riemann's seminal work \cite{ri_coll}
where he introduced the Riemann function for a specific case.

In \cite{darb1} Darboux generalized and systemized Riemann's work,
and applied Picard iteration to solve the characteristic boundary 
value problem for \eq{linear}, i.e., the case when $u$ is prescribed along one 
vertical and one horizontal line. Part IV \cite{darb2} of the same work
contains a note by Picard outlining the method of successive 
approximations and its use for 2nd order hyperbolic equations. 
In this note, Picard treats the characteristic boundary value problem 
for both linear and nonlinear equations \eq{eqn}, and for the former 
he also considers the problem where $u$-data are 
prescribed along one characteristic curve as well along one 
non-characteristic curve. 

In Chapter XXVI of his Cours d'Analyse, 
Goursat \cite{gour} provided a detailed exposition of these results 
and also treated the Cauchy problem. In addition, Goursat 
considered the issue of integrating the elementary equation $u_{xy}(x,y)=F(x,y)$, 
where $F$ is a given function, with prescribed $u$-values
along two non-characteristic curves in the first quadrant.

The notes \cite{pic} by Picard again 
treat a series of boundary value problems for hyperbolic second order 
equations, both linear and nonlinear:
\begin{itemize}
	\item[(i)] the characteristic boundary value problem 
	($u$ prescribed along two characteristics, one of each family);
	\item[(ii)] the Cauchy problem ($u$ together 
	with one of its first partial derivatives prescribed along a single, 
	non-characteristic curve);
	\item[(iii)] $u$ prescribed along one characteristic and one 
	non-characteristic curve (both passing through a given point);
	\item[(iv)] $u$ prescribed along two non-characteristic curves 
	(with both passing through a given point; two cases are considered: 
	both curves lie in the first quadrant, or one lies in the first and 
	one lies in the fourth). 
\end{itemize}
For linear equations the existence of a unique solution 
is obtained in each of these ``classical'' cases via successive 
approximations, \cite{pic}.

While everybody agrees that problem (ii) above
be named the {\em Cauchy problem}, the terminology for the other 
problems is not uniform in the literature, \cite{nak}. E.g., 
Pogorzelski \cite{pog} refers to problems (i) and (iii) 
above as the {\em Darboux problem} and the {\em Picard problem},
respectively. Walter \cite{wal} and Kharibegashvili \cite{khar} refer 
to (i) as the {\em Goursat problem}, while Lieberstein \cite{lieb} uses this 
for problem (iv). Other authors \cites{ch,gar,dib} call (i) the {\em 
characteristic initial value (or characteristic Goursat) problem}. 
Kharibegashvili \cite{khar} also refers to 
problems (iii) and (iv) as the {\em 1st and 2nd Darboux problems}, 
respectively. 

In any case, none of the works mentioned thus far addresses the 
boundary value problem \eq{eqn}-\eq{data1}-\eq{data2} where $u$ 
is prescribed along one non-characteristic curve and $u_x$ is 
prescribed along another non-characteristic curve (with both curves located 
in the first quadrant). 

The 1940s and 50s saw renewed interest in hyperbolic equations. 
In particular, various new types of boundary value problems were
considered for equations of the form \eq{eqn}.
Among the many works in this area we shall only comment on those 
that are most closely related to the problem described in Section \ref{intro}.
(For an extensive  bibliography, see Walter \cite{wal}.) We shall 
see that, while the results in these works do apply to Case (I) above,
none of them covers Case (II). The reason is essentially the same in each
case: the various setups put a restriction on the relative location of 
the data curves for $u$ and $u_x$, excluding Case (II).

Szmydt \cites{szm1,szm2} considered two generalized boundary value problems 
for equations of the form \eq{eqn}. For both types of problems, the 
data are prescribed along two curves, $\Gamma=\{y=\gamma(x)\}$ defined for 
$-\alpha\leq x\leq \alpha$, and $\Lambda:=\{x=\lambda(y)\}$ defined for 
$-\beta\leq y\leq \beta$. It is explicitly assumed that $\alpha$, $\beta$ are finite, 
and that $\Gamma$ and $\Lambda$ are situated within the rectangle 
$D=[-\alpha,\alpha]\times [-\beta,\beta]$.
For the first type of problem Szmydt prescribes $u(x_0,y_0)=u_0$ at an arbitrary 
point $(x_0,y_0)\in D$,
\beq\label{szm1}
	u_x(x,y)=G(x,u(x,y),u_y(x,y))\qquad\text{along $y=\gamma(x)$,}
\eeq
and similarly $u_y$ as a function $H(y,u,u_x)$ along $x=\lambda(y)$. 
For the second type of problem, $u_x$ is again 
prescribed according to \eq{szm1}, while it is required that
\beq\label{szm2}
	u(\lambda(y),y)=u_0+\int_{y_0}^y B(t,u(\lambda(t),t),u_x(\lambda(t),t))\, dt
\eeq
holds for $-\beta\leq y\leq \beta$. Here, $u_0$ is a constant, $-\beta\leq y_0\leq \beta$,
and $B$ is a continuous function. As Szmydt  points out in Remark 1 of \cite{szm1}, 
if $\lambda$ is of the class $C^1$, then the two problems are equivalent. 
Under various conditions, existence 
of a local solution is obtained for each type of problem. 
As pointed out in \cite{szm2}, the results 
cover the classical problems (i)-(iii) listed above. 

Now consider the linear equation \eq{eqn} with data prescribed 
along two straight lines $M$ and $N$ as in  
\eq{data1}-\eq{data2}, where we assume  $ab>1$ (Case (II)). 
To formulate this in the setup of Szmydt \cites{szm1,szm2} we must 
let $\Gamma=M$ and $\Lambda=N$, since the curves are to be contained within
the rectangle $D$. However, in either problem considered by Szmydt,  
$u_x$ is prescribed along $\Gamma=M$, while in our assignment \eq{data1},
$u$ is assigned along $M$. Thus, Szmydt's setup does not apply 
to Case (II). 

Next, the concise work \cite{cil} by Ciliberto considers the following boundary 
value problem for \eq{eqn} (``Problem (A)'' in \cite{cil}).
We are given two curves 
\[C_1=\{y=\alpha(x)\,:\, a\leq x\leq b\} \qquad \text{and}\qquad
C_2=\{x=\beta(y)\,:\, c\leq y\leq d\},\]
where 
\beq\label{ranges}
	c\leq\alpha(x)\leq d \quad\text{for $a\leq x\leq b$, and}\quad
	a\leq\beta(y)\leq b\quad\text{for $c\leq y\leq d$,}
\eeq
and $\alpha(0)=\beta(0)=0$. Then: determine a solution of \eq{eqn} 
with assigned values of $u$ along $C_2$ and with assigned values of
$u_x$ along $C_1$. Under suitable conditions, Ciliberto establishes the 
existence of a (global) solution to this problem. (There is considerable 
overlap between Ciliberto's and Szmydt's results; see footnote 
(${}^3$) on p.\ 384 in \cite{cil}.)

If we try to apply this setup to \eq{eqn} with data prescribed 
along two straight lines $M$ and $N$ as in  
\eq{data1}-\eq{data2}, we encounter the same issue as with Szmydt's 
setup. Namely, since, according to \eq{ranges}, the curves should be 
situated within the rectangle $[a,b]\times[c,d]$, $C_1$ must be chosen 
as $M$ and $C_2$ must be chosen as $N$. However, this is precisely 
Case (I) in Section \ref{intro}, showing that the analysis in \cite{cil} does 
not apply to Case (II). 

Ciliberto remarks (footnote (${}^2$) on p.\ 384 of \cite{cil}) that an entirely
equivalent problem is obtained if $u$ is prescribed along $C_1$ and $u_y$ 
is prescribed along $C_2$. This being the case, it is surprising to us that no remark is made about 
the (non-equivalent) problem where $u$ is prescribed along $C_1$ and $u_x$ 
is prescribed along $C_2$ (i.e., Case (II) above).

Next we consider the detailed work \cite{azdz} by Aziz \& Diaz 
on linear equations of the form
\beq\label{az1}
 	u_{xy}+a(x,y)u_x+b(x,y)u_y+c(x,y)=d(x,y),
\eeq
and with boundary conditions of the form
\beq\label{bc1}
	\alpha_0(x)u+\alpha_1(x)u_x+\alpha_2(x)u_y=\sigma(x) \qquad\text{on $y=f_1(x)$, $x\in[0,x_0]$,}
\eeq
and
\beq\label{bc2}
	\beta_0(y)u+\beta_1(y)u_x+\beta_2(y)u_y=\tau(y) \qquad\text{on $x=f_2(y)$, $y\in[0,y_0]$,}
\eeq
and
\beq\label{bc3}
	u(0,0)=\gamma,
\eeq
for given functions $\alpha_i$, $\beta_i$, $f_i$, and a given constant $\gamma$.
The problem \eq{az1}-\eq{bc1}-\eq{bc2}-\eq{bc3} is posed on a characteristic 
rectangle $R:=[0,x_0]\times[0,y_0]$, and it is explicitly assumed in \cite{azdz}
that 
\beq\label{azdz_assumpn1}
	0\leq f_1(x)\leq y_0\qquad\text{with $f_1(x)$ defined for all $x\in[0,x_0]$,}
\eeq
and that
\beq\label{azdz_assumpn2}
	0\leq f_2(y)\leq y_0\qquad\text{with $f_2(y)$ defined for all $y\in[0,y_0]$.}
\eeq
Three cases are treated by Aziz \& Diaz: (here, e.g.\ ``$\alpha_1,\, \beta_2\neq 0$'' means that 
$\alpha_1(x)\neq 0$ for all $x\in[0,x_0]$ and $\beta_2(y)\neq 0$ for all $y\in[0,y_0]$)
\begin{itemize}
	\item[(*)] $\alpha_1,\, \beta_2\neq 0$ and $\alpha_2=\beta_1\equiv 0$.
	\item[(**)] $\alpha_1,\, \beta_2\neq 0$, but with additional conditions imposed on $f_i$, $\sigma$ and $\tau$.
	\item[(***)] $\alpha_0,\, \beta_0\neq0$, $\alpha_1=\alpha_2=\beta_1=\beta_2\equiv0$.
\end{itemize}
Under suitable assumptions, the authors establish 
existence and uniqueness of a solution to \eq{az1}-\eq{bc1}-\eq{bc2}-\eq{bc3} 
in each case. 

To apply this setup in Case (II) of the problem \eq{eqn}-\eq{data1}-\eq{data2} in Section \ref{intro}
we would need to differentiate \eq{data1} with respect to $y$, 
and then see if (**) can be applied. This, however, fails for the 
following reason. The differentiated condition is
\[au_x(ay,y)+u_y(ay,y)=\vp'(y),\qquad\text{(together with $u(0,0)=\vp(0)$),}\]
and this must play the role of \eq{bc2} above (because the other boundary condition is
$u_x(x,bx)=\psi(x)$, which does not contain $u_y$). It follows that the curves $\Gamma_1$ and $\Gamma_2$ 
in \cite{azdz} must correspond to the curves $N$ and $M$, respectively,
in our Case (II). Thus, $\Gamma_1$ lies above $\Gamma_2$ in the first quadrant.
But then it is not possible to find $x_0$ and $y_0$ so that both \eq{azdz_assumpn1} and
\eq{azdz_assumpn2} are satisfied. 
(We note that the work \cite{azdz} provides a very detailed review and criticism of 
earlier works.)

We finally mention the more recent work \cite{kj} which treats 
nonlinear equations of the type \eq{eqn} in angular domains. Again, 
a careful reading reveals that the analysis in \cite{kj}
applies to Case (I), but not to Case (II).

\section{Existence for $u_{xy}=u$ in Case (II)}\label{exist_uxy=u}
In this section, we establish existence of a local solution to the 
following mixed boundary value problem:
\beq\label{uxy=u}
	u_{xy}=u
\eeq
with data
\beq\label{data3}
	u(ay,y)=\vp(y) \qquad 0\leq y\leq y_A,
\eeq
and
\beq\label{data4}
	u_x(x,bx)=\psi(x) \qquad 0\leq x\leq x_B,
\eeq
where $\vp$ is a $C^1$ function on $[0, y_A]$ and $\psi$ is a $C^0$ function on $[0,x_A]$.
We assume $ab>1$, so that we are in Case (II) (see Figure~\ref{Figure_uxy=u_data}),  and 
we proceed to write down an iteration scheme which will provide 
existence of a local solution. 

\begin{figure}
	\centering
	\includegraphics[width=10cm,height=8cm]{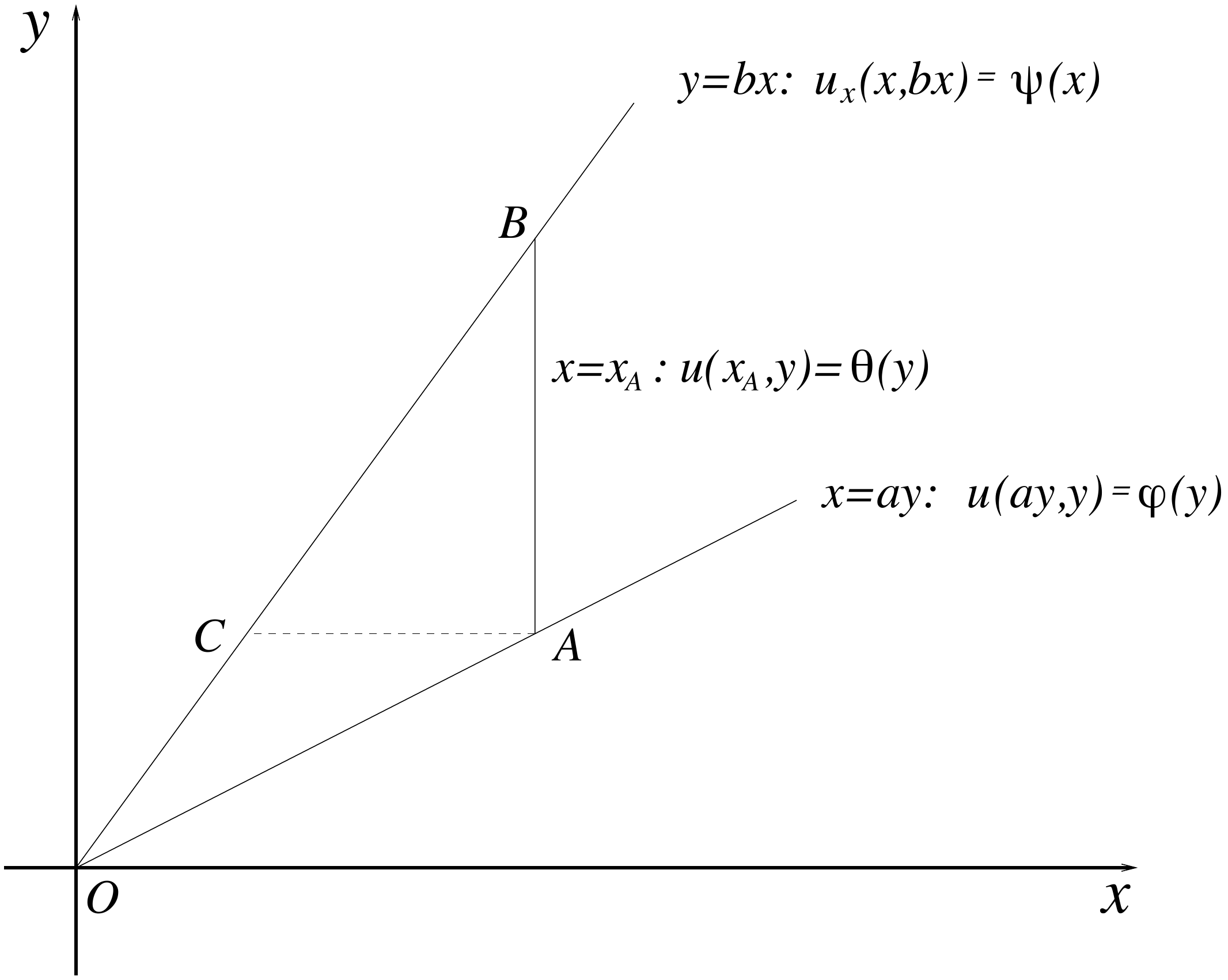}
	\caption{$u_{xy}=u$: in addition to the original data $\vp$ and $\psi$,
	we also prescribe $u$-data $u(x_A,y)=\theta(y)$ along the vertical line $AB$.}
	\label{Figure_uxy=u_data}
\end{figure} 

As outlined above, we circumvent the problem in Case (II) (i.e., requiring data 
progressively farther away from the origin for the iterates), by 
artificially prescribing $u$-data along the vertical segment $\AB$; see Figure~\ref{Figure_uxy=u_data}.
This results in a ``split'' scheme with different expressions in the triangles 
$\OAC$ and $\ABC$, where $C=(\frac{y_A}{b},y_A)$. The $u$-data along 
$\AB$ are given by
\beq\label{data5}
	u(x_A,y)=\theta(y)\qquad y_A\leq y\leq y_B,
\eeq
where $\theta$ remains to be specified. It must be chosen so that 
the iterates remain continuous on the full closed triangle $\overline{OAB}$. 


\subsection{Iteration scheme}
The iteration scheme is obtained by having $u^{(n+1)}$ be the 
solution of $u_{xy}(x,y)=u^{(n)}(x,y)$ in $\overline{OAB}$, with the originally assigned
boundary data along $\OA$ and $\OB$, 
together with $u$-data assigned according to \eq{data5} 
along $\AB$. However, continuity of the solution throughout the triangle 
$\overline{OAB}$ (in particular, across the horizontal segment $\AC$)
puts restrictions on $\theta$. To determine these we first consider 
the elementary case of $u_{xy}=f(x,y)$. The solution  
in this case is given by the split expression
\beq\label{OAC_soln}
	u(x,y)=
	\vp(y)-\int_x^{ay}\psi(\xi)\, d\xi+\int_x^{ay}\int_y^{b\xi}f(\xi,\eta)\, d\eta d\xi
	\qquad\text{on $\overline{OAC}$,}
\eeq
and
\beq\label{ABC_soln}
	u(x,y)=\theta(y)-\int_x^{x_A} \psi(\xi)\, d\xi+\int_x^{x_A} \int_y^{b\xi}f(\xi,\eta)\, d\eta d\xi
	\qquad\text{on $\overline{ABC}$.}
\eeq
A first issue is what restrictions need to be imposed on $\theta$ to 
guarantee that the function $u$ given by \eq{OAC_soln}-\eq{ABC_soln} 
is consistently defined on the segment $\AC$ and is of the class $C^1$.
This is answered by:
\begin{lemma}\label{regularity}
        Consider the mixed boundary value problem for 
        the equation $u_{xy}=f(x,y)$ with data 
        \eq{data3}-\eq{data4}-\eq{data5}, where $ab>1$ and
        \begin{itemize}
        	\item[(A1)] $\vp\in C^1[0,y_A]$
        	\item[(A2)] $\psi\in C^0[0,x_A]$
        	\item[(A3)] $f\in C^0(\overline{OAB})$.
        \end{itemize}
        Let $\theta:[y_A,y_B]\to\RR$ be any function satisfying
        \begin{itemize}
        	\item[(A4)] $\theta\in C^1[y_A,y_B]$
        	\item[(A5)] $\theta(y_A)=\vp(y_A)$
        	\item[(A6)] $\displaystyle{\theta'(y_A)=\vp'(y_A)-a\psi(x_A)
        	+a\int_{y_A}^{y_B} f(x_A,\eta)\, d\eta.}$
        \end{itemize}
        Then the piecewise defined function $u(x,y)$ given by 
        \eq{OAC_soln}-\eq{ABC_soln} provides a solution
        in the following sense: $u$, $u_x$, $u_y$, and $u_{xy}=u_{yx}$ are 
        continuous on $\overline{OAB}$, $u_{xy}(x,y)=f(x,y)$ on $\overline{OAB}$, 
        and $u$ takes the values \eq{data3}-\eq{data4}-\eq{data5} 
        on the boundary of $\overline{OAB}$.
\end{lemma}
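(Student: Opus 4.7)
The plan is to treat the two closed triangles $\OAC$ and $\ABC$ separately, where the explicit formulas \eq{OAC_soln} and \eq{ABC_soln} are each $C^1$ on their respective domains by (A1)--(A4) and standard differentiation under the integral sign (continuity of $f$ on $\OAB$ ensures that the parameter-dependent integrals are $C^1$ in their arguments). The two expressions must then be reconciled along the common edge $\AC=\{(x,y_A):y_A/b\le x\le x_A\}$, and conditions (A5)--(A6) are exactly what is needed to glue them into a $C^1$ function on $\OAB$.

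First I would evaluate both \eq{OAC_soln} and \eq{ABC_soln} at $y=y_A$; using $ay_A=x_A$, they reduce to the same expression except that $\vp(y_A)$ appears in one and $\theta(y_A)$ in the other, so (A5) is necessary and sufficient for continuity of $u$ across $\AC$. Next, a direct computation from either formula yields the common expression
\[u_x(x,y)=\psi(x)-\int_y^{bx}f(x,\eta)\,d\eta,\]
so $u_x$ is automatically continuous across $\AC$, with no additional compatibility needed. For $u_y$, applying the Leibniz rule to \eq{OAC_soln} produces boundary terms both from the outer upper limit $\xi=ay$ (with its chain-rule factor $a$) and from the inner lower limit $\eta=y$, giving
\[u_y(x,y)=\vp'(y)-a\psi(ay)+a\int_y^{aby}f(ay,\eta)\,d\eta-\int_x^{ay}f(\xi,y)\,d\xi,\]
whereas \eq{ABC_soln} yields the simpler $u_y(x,y)=\theta'(y)-\int_x^{x_A}f(\xi,y)\,d\xi$. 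Setting $y=y_A$ (so that $ay=x_A$ and $aby=y_B$) and equating the two expressions reduces exactly to (A6). Together, (A5) and (A6) thus force $u\in C^1(\OAB)$.

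It then remains to verify $u_{xy}=u_{yx}=f$ and to check the boundary data. Differentiating the common formula for $u_x$ with respect to $y$ gives $u_{xy}(x,y)=f(x,y)$ in both open triangles, and the symmetric differentiation of $u_y$ in $x$ gives $u_{yx}=f$; since $f\in C^0(\OAB)$ these extend continuously across $\AC$ as well. The boundary conditions are then read off by direct substitution: $u(ay,y)=\vp(y)$ from \eq{OAC_soln} (both integrals collapse at $x=ay$); $u_x(x,bx)=\psi(x)$ from the common $u_x$-formula (the integral collapses at $y=bx$), valid both on the portion of $\OB$ in $\OAC$ and on the portion in $\ABC$; and $u(x_A,y)=\theta(y)$ from \eq{ABC_soln} at $x=x_A$. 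The main obstacle is not conceptual but bookkeeping: the signs and limits produced by Leibniz differentiation of the double integrals must be tracked precisely. In particular, the factor $a$ in (A6) is the Jacobian from the outer upper limit $ay$, and the integral $\int_{y_A}^{y_B}f(x_A,\eta)\,d\eta$ in (A6) captures exactly the vertical slice of $f$ along $\AB$ that the $\OAC$-formula ``sees'' via its outer-limit contribution at $\xi=x_A$.
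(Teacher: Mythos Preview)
Your proposal is correct and follows essentially the same approach as the paper: compute $u$, $u_x$, and $u_y$ explicitly on each triangle from \eq{OAC_soln}--\eq{ABC_soln}, observe that $u_x$ has a common formula, and match $u$ and $u_y$ along $\AC$ at $y=y_A$ to see that (A5) and (A6) are exactly the gluing conditions. The explicit formulas you obtain for $u_x$ and $u_y$ coincide with those in the paper, and the verification of $u_{xy}=u_{yx}=f$ and of the boundary data proceeds identically.
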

\begin{proof}
Continuity of $u$ in the  triangles $\OAC$ and $\ABC$ is  
clear from \eq{OAC_soln}-\eq{ABC_soln}, combined with (A1)-(A4).
 To show that  \eq{OAC_soln}-\eq{ABC_soln} consistently define $u$ along the common side $ \AC$, we substitute
\beq\label{AC} y=y_A=\frac{x_A}{a}\eeq
in  \eq{OAC_soln}-\eq{ABC_soln},  respectively. We get for $(x,y_A)\in \AC$:
\begin{align}\label{OAC_AC}
	u(x,y_A)&=
	\vp(y_A)-\int_x^{ay_A}\psi(\xi)\, d\xi
	+\int_x^{ay_A}\int_{y_A}^{b\xi}f(\xi,\eta)\, d\eta d\xi, \\
	\nonumber &=\vp(y_A)-\int_x^{x_A}\psi(\xi)\, d\xi
	+\int_x^{x_A}\int_{y_A}^{b\xi}f(\xi,\eta)\, d\eta d\xi, \\
	\label{ABC_AC}
	u(x,y_A)&=
	\theta(y_A)-\int_x^{x_A} \psi(\xi)\, d\xi
	+\int_x^{x_A} \int_{y_A}^{b\xi}f(\xi,\eta)\, d\eta d\xi.
\end{align}
 We see that (A5) is the only condition that we need  in order to  guarantee 
 that   $u$ is consistently  defined along $\AC$ and is continuous.
 
  To check that $u$ takes the assigned  
values on $\OA$, we substitute $x=ay$  into  \eq{OAC_soln}:
\begin{align}\label{OAC_x=ay}
	u(ay,y)&=\vp(y)-\int_{ay}^{ay}\psi(\xi)\, d\xi
	+\int_{ay}^{ay}\int_y^{b\xi}f(\xi,\eta)\, d\eta d\xi=\vp(y).
\end{align}
Differentiation  of \eq{OAC_soln} and \eq{ABC_soln} with respect 
to $x$ produces the same formula:
\beq\label{ux} u_x=\psi(x)-\int_y^{bx}f(x,\eta)d\eta,\eeq
which is clearly continuous on the entire region $\overline{OAB}$  and takes 
the assigned values $\psi(x)$ along $\OB$, where $y=bx$. 
Differentiation  of \eq{OAC_soln} and \eq{ABC_soln} with 
respect to $y$ produces two different formulas:
\begin{align}
	\label{OAC_uy}
	u_y(x,y)&=
	\vp'(y)-a\psi(ay)+a\int_y^{aby}f(ay,\eta)\, d\eta -\int_x^{ay}f(\xi,y)\, d\xi
	\qquad\text{on $\OAC$},\\
	\label{ABC_uy}
	u_y(x,y)&=\theta'(y)-\int_x^{x_A}f(\xi,y) d\xi
	\qquad\text{on $\ABC$.}
\end{align}
Continuity of $u_y$ on  $\OAC$ and $\ABC$ 
is clear from \eq{OAC_uy}-\eq{ABC_uy}, combined with (A1)-(A4).
To show that  \eq{OAC_uy}-\eq{ABC_uy} consistently define $u$ along the common side $ \AC$, we substitute \eqref{AC} 
into these formulas:
\begin{align}\nonumber
	u_y(x,y_A)&=
	\vp'(y_A)-a\psi(ay_A)+a\int_{y_A}^{aby_A}f(ay,\eta)\, d\eta 
	-\int_x^{ay_A}f(\xi,y)\, d\xi\\
	\label{OAC_uy-AC}&=\vp'(y_A)-a\psi(x_A)
	+a\int_{y_A}^{y_B}f(ay,\eta)\, d\eta -\int_x^{x_A}f(\xi,y)\, d\xi\\
	\label{ABC_uy-AC}
	u_y(x,y_A)&=\theta'(y_A)-\int_x^{x_A}f(\xi,y) d\xi.
\end{align}
We see that (A6) is the only condition needed  in order to guarantee 
that $u_y$ is is consistently  defined along $\AC$ and is continuous.

Finally, by differentiating  \eqref{ux} with respect to $y$ and 
differentiating  \eqref{OAC_uy}  and \eqref{ABC_uy} with 
respect to $x$ we immediately  see that
\[u_{xy}=u_{yx}=f(x,y).\]
\end{proof}

We now return to the equation $u_{xy}=u$ with data \eq{data3}-\eq{data4}-\eq{data5}.
Let the $n$th iterate $u^{(n)}(x,y)$ play the role of 
$f$ above. Substituting $\theta(y)$ for $u^{(n)}(x_A,y)$ in 
(A6), we now fix any function $\theta\in C^1[y_A,y_B]$ satisfying
\beq\label{theta_conds}
	\theta(y_A)=\vp(y_A)\qquad\text{and}\qquad
	\theta'(y_A)=\vp'(y_A)-a\psi(x_A)
        	+a\int_{y_A}^{y_B} \theta(\eta)\, d\eta.
\eeq
It is clear that there are many possible choices for $\theta$.   E.g., we 
may choose $\theta(y)$ as an affine function or quadratic polynomial 
in $y$.  A straightforward calculation shows that in either case, 
$|\theta(y)|$ may be bounded by a constant depending on $(y_B-y_A)$ and upper 
bounds on $|\psi|$, $|\vp|$, and $|\vp'|$.
We can now specify a suitable iteration scheme. We start by setting
\beq\label{1st_step_OAC}
	u^{(0)}(x,y):=\vp(y)-\int_x^{ay}\psi(\xi)\, d\xi \qquad \text{on $\overline{OAC}$}
\eeq
and
\beq\label{1st_step_ABC}
	u^{(0)}(x,y):=\theta(y)-\int_x^{x_A}\psi(\xi)\, d\xi \qquad \text{on $\overline{ABC}.$}
\eeq
Note that $u^{(0)}$ is well defined on the overlap $\AC$ of the two regions, 
thanks to \eq{theta_conds}${}_1$, and so is continuous throughout $\overline{OAB}$.  In addition, $u^{(0)}$ equals  to $\theta$ 
along $\AB$.

The first iterate $u^{(1)}(x,y)$ is then defined according to 
\eq{OAC_soln}-\eq{ABC_soln} with $u^{(0)}$ substituted for $f$. 
By definition, $u^{(1)}$ reduces to $\theta$ 
along $\AB$, and, due to the properties \eq{theta_conds} of $\theta$, 
$u^{(1)}$ is continuous throughout $\overline{OAB}$ 
according to Lemma \ref{regularity}.

Next, assuming the $n$th iterate $u^{(n)}$ has been determined, and is 
continuous, throughout $\overline{OAB}$, we define the next iterate $u^{(n+1)}$ 
according to \eq{OAC_soln}-\eq{ABC_soln} with $u^{(n)}$ substituted for $f$. 
Again, by definition $u^{(n+1)}$ reduces to $\theta$ 
along $\AB$, and, due to the properties \eq{theta_conds} of $\theta$, 
$u^{(n+1)}$ is continuous throughout $\overline{OAB}$ 
according to Lemma \ref{regularity}.

\subsection{Convergence}
According to \eq{1st_step_OAC}-\eq{1st_step_ABC} we have
\[|u^{(0)}(x,y)|\leq \C\qquad\text{for $(x,y)\in \overline{OAB}$,}\]
for a finite constant $\C$ depending on the size of the triangle $\overline{OAB}$
and upper bounds on $|\psi|$, $|\vp|$, and $|\vp'|$. 
(Recall that $\theta$ 
may be chosen so as to be similarly bounded.) 
It follows that 
\[|u^{(1)}(x,y)-u^{(0)}(x,y)|\leq \C(x_A-x)(y_B-y)\qquad\text{for $(x,y)\in \overline{OAB}$,}\]
and that in general
\[|u^{(n)}(x,y)-u^{(n-1)}(x,y)|\leq \frac{\C}{(n!)^2}(x_A-x)^n(y_B-y)^n
\qquad\text{for $(x,y)\in \overline{OAB}$.}\]
It follows that the sequence $(u^{(n)})$ is {uniformly} Cauchy in 
$C^0(\overline{OAB})$, and thus converges uniformly to a 
limit function $u\in C^0(\overline{OAB})$. Recalling that 
\[u^{(n+1)}(x,y)=\vp(y)-\int_x^{ay}\psi(\xi)\, d\xi+\int_x^{ay}\int_y^{b\xi}u^{(n)}(\xi,\eta)\, d\eta d\xi
	\qquad\text{on $\overline{OAC}$,}\]
and
\[u^{(n+1)}(x,y)=\theta(y)-\int_x^{x_A} \psi(\xi)\, d\xi+\int_x^{x_A} \int_y^{b\xi}u^{(n)}(\xi,\eta)\, d\eta d\xi
	\qquad\text{on $\ABC$,}\]
and passing to the limit $n\to \infty$, shows that $u$ satisfies
\[u(x,y)=\vp(y)-\int_x^{ay}\psi(\xi)\, d\xi+\int_x^{ay}\int_y^{b\xi}u(\xi,\eta)\, d\eta d\xi
	\qquad\text{on $\OAC$,}\]
and
\[u(x,y)=\theta(y)-\int_x^{x_A} \psi(\xi)\, d\xi+\int_x^{x_A} \int_y^{b\xi}u(\xi,\eta)\, d\eta d\xi
	\qquad\text{on $\ABC$.}\]
Direct calculations then show that $u$ is a solution to the mixed 
boundary value problem \eq{uxy=u}-\eq{data3}-\eq{data4}.

\begin{remark}
	Unsurprisingly, for the linear problem under consideration we obtain 
	a global solution defined on all of $\OAB$. 
	Also, the occurrence in the calculations above of the power series 
	\[\sum_{n=0}^\infty \frac{z^{n}}{(n!)^2}=J_0(2i\sqrt{z}),\] 
	where $J_0$ denotes the Bessel function of the 1st kind of order zero,
	is natural: it is known that 
	the Riemann function for the linear equation $u_{xy}=cu$ ($c$ constant)
	can be expressed in terms of  $J_0$; e.g., 
	see Section 4.4 in \cite{gar}. 
\end{remark}

\section{Non-uniqueness for $u_{xy}=u$ in Case (II)}\label{non_uniqueness}
We proceed to show that the solution to \eq{uxy=u} found in the previous 
section is, in general, not uniquely determined by prescribing $u$ locally 
along $\{x=ay\}$ and $u_x$ locally along $\{y=bx\}$, when $ab>1$. 

Given the existence result in the previous section, and the freedom 
we have in assigning the function $\theta$ along $\AB$, 
it is immediate that different solutions can be obtained whose 
$u$-values agree along the  segment $\OA$ of the line  
$\{x=ay\}$ and whose $u_x$-values agree along  the segment 
$\OB$ of the  line $\{y=bx\}$ (see Figure~\ref{Figure_uxy=u_data}). Indeed, 
by continuity of the solutions on $\overline{OAB}$, it is 
clear that different choices of $\theta$ will give solutions that 
are distinct in some neighborhood of the segment $\AB$. 

However, we shall establish a stronger version of non-uniqueness,
showing that changes in $\theta$-values along $\AB$ can ``propagate''
into the domain $OAB$ and influence
the resulting solution at points arbitrarily close to the origin. 

For this the simplest 
thing appears to be to assign vanishing data for both $u$ and $u_x$, such that 
$u_0(x,y)\equiv 0$ is one solution. The issue then is to show that there 
is another solution $\hat u(x,y)$ which is not identically zero. This will be done
by carefully choosing a non-negative function $\theta$, satisfying the constraints
\eq{theta_conds}, i.e.,
\beq\label{0_theta_conds}
	\theta(y_A)=0\qquad\text{and}\qquad
	\theta'(y_A)=a\int_{y_A}^{y_B} \theta(\eta)\, d\eta,
\eeq
and then run the iteration scheme described in the previous section. 
We shall show that this yields a solution $\hat u$ which takes on strictly positive 
values arbitrarily close to the origin.

\begin{theorem}\label{main_result}
	Consider the boundary value problem \eq{uxy=u}-\eq{data3}-\eq{data4} 
	with vanishing boundary data $\vp\equiv 0$ and $\psi\equiv 0$. 
	For a given $\theta\in C^1[y_A,y_B]$ satisfying \eq{0_theta_conds}, 
	let $u_\theta$ denote the solution generated by the scheme in 
	Section \ref{exist_uxy=u}. Then: It is possible to choose $\theta$ so that 
	$u_\theta(x,y)>0$ at all points $(x,y)$ in the open  triangle $OAC$.
\end{theorem}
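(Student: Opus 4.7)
My plan is to choose $\theta\in C^1[y_A,y_B]$ non-negative on $[y_A,y_B]$ and strictly positive on $(y_A,y_B]$ while still satisfying \eqref{0_theta_conds}, and then show that the iterates $u^{(n)}$ from the scheme in Section~\ref{exist_uxy=u} form a monotone non-decreasing non-negative sequence in which the strict positivity of $\theta$ in $\ABC$ propagates downward into $\OAC$ one layer at a time. Because $ab>1$, each layer extends the positivity region by a factor $1/(ab)$ toward the origin in the $y$-direction, so in the limit every point of the open triangle $OAC$ is reached.

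\emph{Choice of $\theta$.} Writing \eqref{0_theta_conds} as $\theta(y_A)=0$ together with $\ell(\theta):=\theta'(y_A)-a\int_{y_A}^{y_B}\theta(\eta)\,d\eta=0$, I would use two non-negative trial functions. Take $\theta_0(y):=(y-y_A)^2$, for which $\ell(\theta_0)=-a(y_B-y_A)^3/3<0$, and $\theta_N(y):=(y-y_A)\,g_N(y)$ where $g_N\ge 0$ is a $C^1$ bump with $g_N(y_A)=N$ and support in $[y_A,y_A+1/N]$ (for instance $g_N(y)=N\phi\bigl(N(y-y_A)\bigr)$ for a fixed $C^1$ bump $\phi$ with $\phi(0)=1$), so that $\ell(\theta_N)=N-O(1/N)\to+\infty$ as $N\to\infty$. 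Since $\ell$ is linear, for $N$ large the intermediate value theorem applied to $\lambda\mapsto\ell\bigl(\lambda\theta_N+(1-\lambda)\theta_0\bigr)$ produces $\lambda\in(0,1)$ such that $\theta:=\lambda\theta_N+(1-\lambda)\theta_0$ satisfies $\ell(\theta)=0$; this $\theta$ is non-negative on $[y_A,y_B]$ and, because the $\theta_0$-term alone is positive there, strictly positive on $(y_A,y_B]$.

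\emph{Monotonicity and propagation of positivity.} With $\varphi=\psi=0$, the integration limits $[x,ay]$ and $[y,b\xi]$ in \eqref{OAC_soln}, respectively $[x,x_A]$ and $[y,b\xi]$ in \eqref{ABC_soln}, are in positive orientation for interior points (using $ay>x$, $b\xi>y$ on the open $OAC$ and $x_A>x$, $b\xi>y$ on the open $ABC$). An induction on $n$, starting from $u^{(0)}=0$ on $\OAC$ and $u^{(0)}=\theta\ge 0$ on $\ABC$, then gives $0\le u^{(n)}\le u^{(n+1)}$ throughout $\OAB$. Setting $\rho_n:=y_A/(ab)^n$, I would then prove by induction the sharper claim that $u^{(n)}>0$ on the interior of $\ABC$ (where $u^{(n)}\ge\theta>0$) and on $\{(x,y)\in OAC:y>\rho_n\}$. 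The inductive step is where Case (II) enters: for $(x,y)$ in the open $OAC$ with $y>\rho_{n+1}$, the integration region $D(x,y)=\{(\xi,\eta):x\le\xi\le ay,\;y\le\eta\le b\xi\}$ attains the maximum $\eta$-value $aby>\rho_n$, so $D(x,y)\cap\{\eta>\rho_n\}$ is a non-empty open subset on which $u^{(n)}>0$ (whether the points lie in $OAC$ or in the interior of $\ABC$); continuity and non-negativity of $u^{(n)}$ then force $u^{(n+1)}(x,y)>0$.

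\emph{Conclusion and main difficulty.} Uniform convergence $u^{(n)}\to u_\theta$ on $\OAB$ combined with monotonicity yields $u_\theta\ge u^{(n)}$ for every $n$; given $(x,y)$ in the open triangle $OAC$, I pick $n$ so that $\rho_n<y$ (possible because $ab>1$ forces $\rho_n\to 0$) to conclude $u_\theta(x,y)>0$. The principal technical obstacle is the inductive positivity step: I must verify that for every $(x,y)\in OAC$ the integration region reaches up across $\AC$ into the reservoir of strict positivity that $\theta$ seeds in $\ABC$, and that the resulting ``positivity floor'' $\rho_n$ shrinks geometrically toward zero. This is precisely the geometric mechanism distinguishing Case (II), and it is what drives the strong non-uniqueness asserted by the theorem.
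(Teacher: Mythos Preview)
Your proof is correct, and the core geometric mechanism matches the paper's: pick $\theta>0$ on the open interval, stratify $OAC$ by the heights $\rho_n=y_A/(ab)^n$, and show that positivity is pushed inward one layer per iteration. Your layers coincide with the paper's trapezoids $\mathcal T_N$, since in the paper's notation $y_{A_N}=y_A/(ab)^N$.

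Where you diverge is in the inductive architecture. You first establish the global monotonicity $0\le u^{(n)}\le u^{(n+1)}$ on all of $\overline{OAB}$ (a one-line induction from the non-negativity of the kernels), and then run a \emph{single} induction on $n$ to obtain $u^{(n)}>0$ on $\{y>\rho_n\}\cap OAC$; the conclusion $u_\theta\ge u^{(n)}>0$ then drops out. The paper instead runs a \emph{double} induction---outer on the layer index $N$, inner on the iterate index $n$---to prove the stronger intermediate statement that $u^{(n)}>u^{(n-1)}$ on $\mathcal T_N$ for every $n\ge N$, supported by two auxiliary claims (non-negativity and vanishing of $u^{(n)}$ on $\overline{OA_nC_n}$, and strict monotonicity on $ABC$). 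Your route is shorter and sidesteps the nested induction; the paper's yields the extra information that the iterates are eventually strictly increasing on each fixed layer, but this is not needed for the theorem itself. Your explicit construction of $\theta$ via a convex combination of $(y-y_A)^2$ and a bump is also more detailed than the paper's, which simply asserts that such a $\theta$ exists.
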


\begin{proof}
\begin{figure}
	\centering
	\includegraphics[width=10cm,height=10cm]{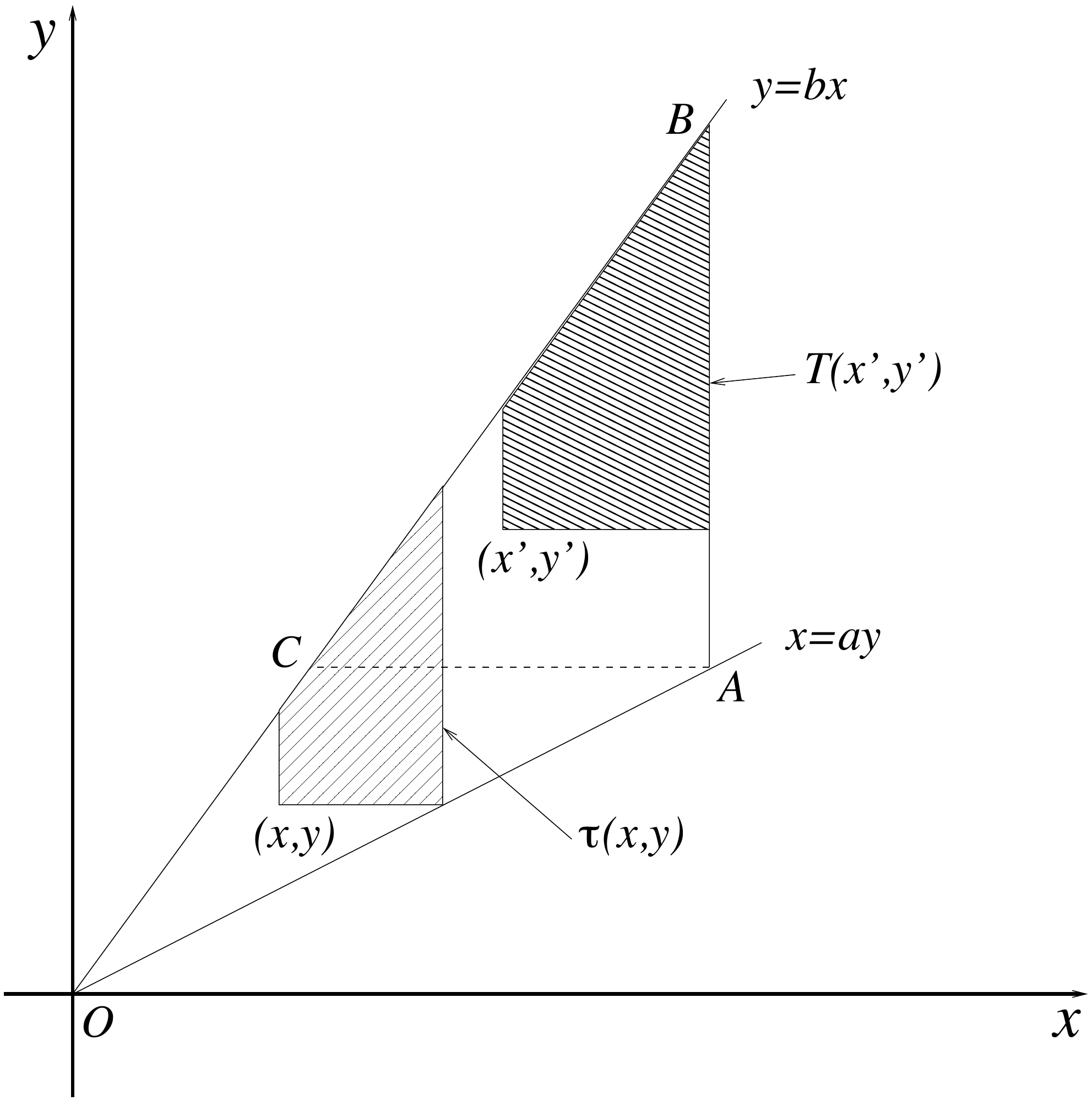}
	\caption{$u_{xy}=u$: the trapezoids $\tau(x,y)$ and $T(x',y')$.}
	\label{figure-uxy=u_trapezoids}
\end{figure} 
\begin{figure}
	\centering
	\includegraphics[width=10cm,height=10cm]{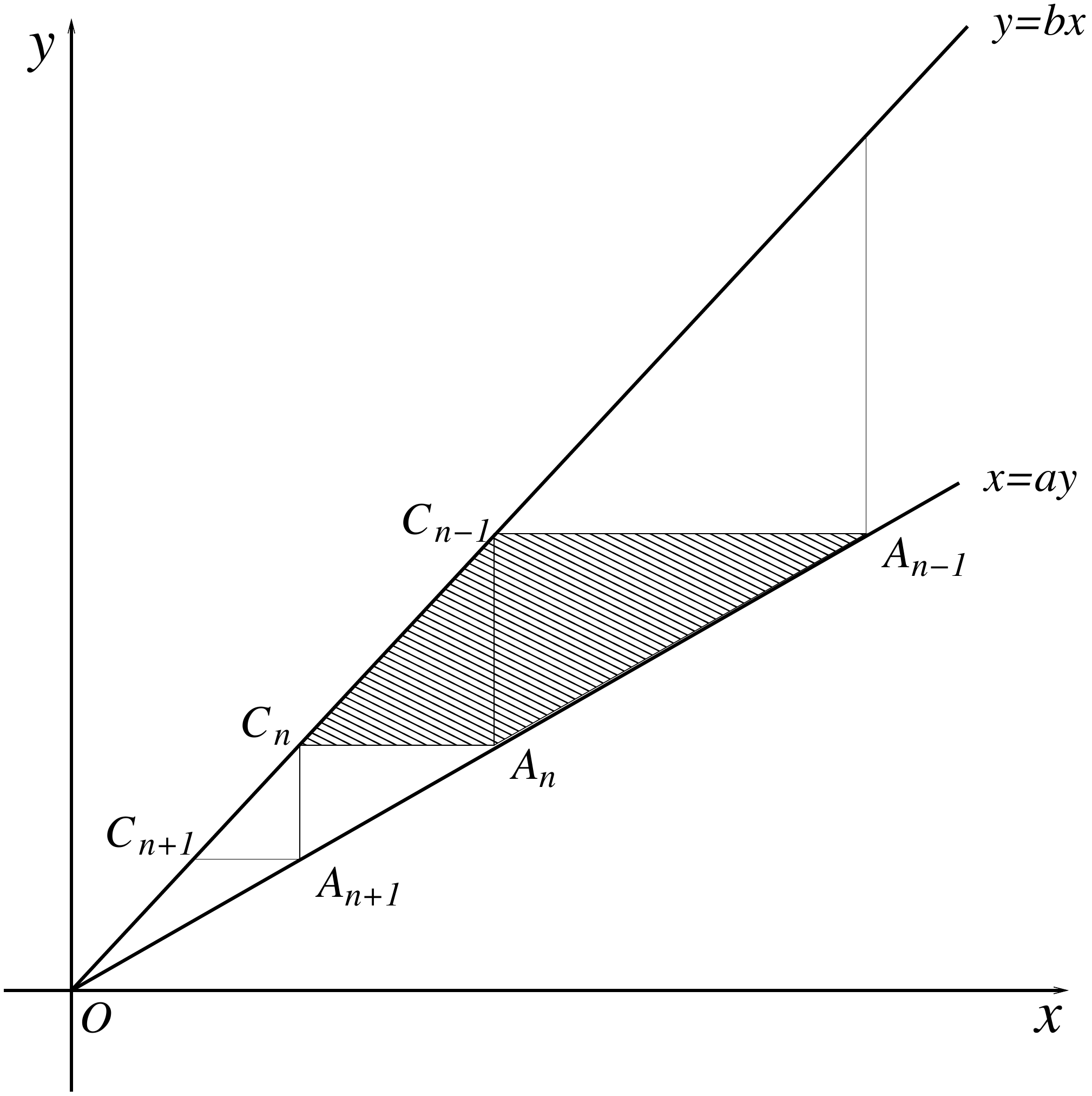}
	\caption{$u_{xy}=u$: the subdomains used in the proof of 
	Theorem \ref{main_result}.}
	\label{figure-uxy=u_points}
\end{figure} 
It is convenient to introduce the following notation (see Figure~\ref{figure-uxy=u_trapezoids}). 
For $(x,y)\in \ABC$, let  $T(x,y)$ be the closed trapezoid with vertices $(x,y)$, $(x_A,y)$, $B$,
and $(x,bx)$. For $(x,y)\in \OAC$, let $\tau(x,y)$  be the closed. trapezoid with vertices 
$(x,y)$, $(ay,y)$, $(ay,bay)$, and $(x,bx)$.  Note that  for $(x,y)\in \AC$, these trapezoids coincide. 
With $\theta$ as indicated, the iteration scheme in 
Section \ref{exist_uxy=u} for $u_{xy}=u$ with vanishing boundary 
data $\vp\equiv 0$ and $\psi\equiv 0$, is then:
\begin{itemize}
	\item for $(x,y)\in \OAC$:
	\beq\label{OAC_soln-0}
		\iu 0(x,y)=0  \quad \text{and} \quad 
		\iu{n+1}(x,y)= \iint_{\tau(x,y)}\!\!\iu n(\xi,\eta)\, d\eta d\xi;
	\eeq
	\item for $(x,y)\in \ABC$:
	\beq\label{ABC_soln-0}
		\iu 0(x,y)=\theta(y)  \quad \text{and} \quad 	
		\iu{n+1}(x,y)=\theta(y)+\iint_{T(x,y)}\!\!\iu n(\xi,\eta)\, d\eta d\xi.
	\eeq
\end{itemize}
We impose the additional requirement that $\theta$ satisfies $\theta(y)>0$ 
on the open interval $(y_A, y_B)$. It is not difficult to see that there are such functions 
which also satisfy \eq{0_theta_conds}.

The goal is to show that the strict positivity of $\theta$ ``propagates'' into the solution 
on all of $OAC$ in the  precise sense of Claim \ref{main_claim} below. First, we introduce 
the following notation. Set 
\begin{align*}
	A_0&:=A,\quad  C_0:=C,\\
	A_i:&=\big(x_{C_{i-1}}, \textstyle\frac{1}{a}x_{C_{i-1}}\big),\quad C_i:=\big( \textstyle\frac{1}{b} y_{A_i}, y_{A_i}\big)\quad\text{for $i=1,2,\dots$,}
\end{align*}
(see Figure~\ref{figure-uxy=u_points}), and define the trapezoidal regions: 
\[\mathcal T_N:=A_N A_{N-1}C_{N-1} C_N\cup {C_{N-1}A_{N-1}},\]
 with the top side included and other three sides excluded, where $C_{N-1}A_{N-1}$ is an open segment.

\begin{claim}\label{main_claim}
	For any $N\geq 1$, the following holds: for all $n\geq N$,
	$\iu {n}>\iu {n-1}$ on $\mathcal T_N$.
\end{claim}
Since the iterates $\iu n$ converge uniformly on $OAB$ to the limit solution $u_\theta$,
and since the trapezoids $\mathcal T_N$ in Claim \ref{main_claim} exhaust $OAC$ 
as $N\to\infty$,  Claim \ref{main_claim} implies that $u_\theta(x,y)>0$ at all points $(x,y)\in OAC$, 
which is the conclusion of the theorem. 

It thus remains to establish Claim \ref{main_claim}. This is carried out via double 
induction on $N$ and $n$. 
We first establish two auxiliary results.
\begin{claim}\label{sub_claim_1}
	For all $n\geq 0$ we have,
	\beq\label{nonneg}
		\iu n\geq 0 \qquad\text{on the triangle $\OAB$,}
	\eeq
	and
	\beq\label{eq-claim1}
		\iu n\equiv 0 \qquad\text{on the triangle $\overline{OA_nC_n}$.}
	\eeq
\end{claim}
\begin{proof} [Proof of Claim \ref{sub_claim_1}]
	Non-negativity of all iterates is immediate from the scheme in 
	\eq{OAC_soln-0}-\eq{ABC_soln-0}, while \eq{eq-claim1} follows easily 
	by induction on $n$: the base step is provided 
	by the first part of \eq{OAC_soln-0}, and the induction step follows from 
	the second part since $\tau(x,y)\subset \overline{OA_nC_n}$ whenever $(x,y)\in \overline{OA_{n+1}C_{n+1}}$.
\end{proof}
For the base step of the main argument we shall also need the following.
\begin{claim}\label{sub_claim_2}
            For all $n\geq 0$ we have
		\begin{align}\label{eq-claim2}
			\iu {n+1}>\iu n \text{ on the triangle $ABC$.}
 		\end{align}
\end{claim}
\begin{proof} [Proof of Claim \ref{sub_claim_2}]
We argue by induction on $n$. Fix $(x,y)\in ABC$. For $n=0$ we have $\iu 0(x,y)=\theta(y)$, 
which is strictly positive on $T(x,y)$. According to \eq{ABC_soln-0} we thus have
 \begin{align*}
 		\iu 1(x,y)&=\theta(y)+\iint_{T(x,y)}\iu 0(\xi,\eta)\, d\eta d\xi\\
		&=\iu 0(x,y)+ \iint_{T(x,y)}\iu 0(\xi,\eta) d\eta d\xi>\iu 0(x,y).
\end{align*}
Next, assuming that $\iu {n}>\iu {n-1}$ holds on $ABC$ for index $n$, \eq{ABC_soln-0} 
 gives
\begin{align*}
	\iu {n+1}(x,y)&=\theta(y)+\iint_{T(x,y)}\iu n(\xi,\eta)\, d\eta d\xi\\
	&>\theta(y)+ \iint_{T(x,y)}\iu{n-1}(\xi,\eta) d\eta d\xi=\iu n(x,y).
\end{align*}
\end{proof}
\begin{proof} [Proof of Claim \ref{main_claim}]
The proof is by double induction. We set  
\begin{align}
	\tau_0(x,y)&:=\tau(x,y)\cap ABC,\nn\\
	\tau_N(x,y)&:=\tau(x,y)\cap \mathcal T_N
	\qquad\text{for $N\geq 1$.}\label{tau-N}
\end{align} 
{\em Base step $N=1$:} Fix $(x,y)\in \mathcal T_1$ and $n\geq 1$; 
we want to show that  $\iu {n}(x,y)>\iu {n-1}(x,y)$.
This is done by induction on $n$.

{\em Base step $n=1$:} We want show $\iu 1(x,y)>\iu 0(x,y)=0$; we have
\beq\label{eq-iu1}
	\iu1(x,y)=\iint_{\tau(x,y)}\iu 0(\xi, \eta) d\eta d\xi
	=\iint_{\tau_0(x,y)}\theta(\eta) d\eta d\xi>0.
\eeq
{\em Inductive step in $n$:} Assuming that for some $n\geq1$ we have
\beq\label{indh-n1}
	\iu n>\iu{n-1}  \qquad\text{on $\mathcal T_1$,}
\eeq 
we want to show that $\iu {n+1}>\iu{n}$ on $\mathcal T_1$. 
We have
\[\iu{n+1}(x,y)=\iint_{\tau(x,y)}\iu n(\xi, \eta) d\eta d\xi
=\big(\!\!\iint_{\tau_0(x,y)}+\iint_{\tau_1(x,y)}\!\!\big)\iu n(\xi, \eta) d\eta d\xi.\]
On  $\tau_0$, $\iu n(\xi, \eta)> \iu {n-1}(\xi, \eta)$ by Claim \ref{sub_claim_2},
while on $\tau_1$,  $\iu n(\xi, \eta) >\iu {n-1}(\xi, \eta)$, according to the induction 
hypothesis \eqref{indh-n1}. Therefore,
\begin{align*}
	\iu{n+1}(x,y)&>\big(\!\!\iint_{\tau_0(x,y)}+\iint_{\tau_1(x,y)}\!\!\big)\iu {n-1}(\xi, \eta) d\eta d\xi\\
	&=\iint_{\tau(x,y)}\iu {n-1}(\xi, \eta) d\eta d\xi=\iu n(x,y),
\end{align*}
completing the argument for the base step $N=1$.

\noindent
{\em Inductive step in $N$:} We now assume that for some $N\geq 1$ 
and for all  $n\geq N$,
\beq\label{indh-N} 
	\iu {n}>\iu {n-1} \qquad\text{on $\mathcal T_N$},
\eeq
and we want to show that $\iu {n}>\iu {n-1}$ 
on $\mathcal T_{N+1} $ for all $n\geq N+1$. 
This is again done by induction on $n$. 
Fix $(x,y)\in \mathcal T_{N+1} $.

{\em Base step $n=N+1$:}  We want  to show $\iu{N+1}(x,y)>\iu{N}(x,y)$; 
we have
\begin{align*}
	\iu {N+1}(x,y)&=\iint_{\tau(x,y)}\iu {N}(\xi,\eta)\, d\eta d\xi\\
	&= \iint_{\tau_{N+1}(x,y)}\iu{N}(\xi,\eta) d\eta d\xi
	+ \iint_{\tau_N(x,y)}\iu {N}(\xi,\eta) d\eta d\xi,
\end{align*}
where $\tau_N(x,y)$ is 
defined by \eqref{tau-N}. According to the first part of Claim 
\ref{sub_claim_1}, the first term in the sum is non-negative. 
For the second term, we apply the induction hypothesis 
\eqref{indh-N} with $n=N$: 
since $\tau_N(x,y)\subset \mathcal T_N$, we 
have $\iu {N}>\iu {N-1}$ on $\tau_N(x,y)$. Therefore,
\begin{align*}
	\iu {N+1}(x,y)&\geq \iint_{\tau_N(x,y)}\iu {N}(\xi,\eta) d\eta d\xi
	>\iint_{\tau_N(x,y)}\iu {N-1}(\xi,\eta) d\eta d\xi.
\end{align*}
Finally, according to the second part of Claim \ref{sub_claim_1},
$\iu {N-1}$ vanishes on $OA_{N-1}C_{N-1}$, which contains $\tau_{N+1}(x,y)$.
It follows that 
\[\iu {N+1}(x,y)>\iint_{\tau_N (x,y)}\iu {N-1}(\xi,\eta) d\eta d\xi
\equiv \iint_{\tau (x,y)}\iu {N-1}(\xi,\eta) d\eta d\xi=\iu{N}(x,y),\]
which establishes the base step $n=N+1$.

{\em Inductive step in $n$:} Assuming that for some $n\geq  N+1$ we have
\beq\label{indh-n2}
	\iu {n}>\iu{n-1} \qquad \text{on $\mathcal T_{N+1}$,}
\eeq 
we want to show that $\iu {n+1}(x,y)>\iu{n}(x,y)$. We have,
\begin{align*}
	\iu {n+1}(x,y)&=\iint_{\tau(x,y)}\iu {n}(\xi,\eta)\, d\eta d\xi
	= \big(\nquad\iint_{\tau_{N+1}(x,y)}\!\!+ \iint_{\tau_N(x,y)}\!\!\big)
	\iu{n}(\xi,\eta) d\eta d\xi.
\end{align*}
For the first integral we use that 
$\tau_{N+1}(x,y)\subset \mathcal T_{N+1}$
and apply the induction hypothesis \eq{indh-n2} on $n$; for the 
second integral we use that 
$\tau_{N}(x,y)\subset \mathcal T_{N}$, 
and apply the induction hypothesis \eq{indh-N} on $N$.
This yields
\begin{align*}
	\iu {n+1}(x,y)&>\iint_{\tau_{N+1}(x,y)}\iu{n-1}(\xi,\eta) d\eta d\xi
	+ \iint_{\tau_N(x,y)}\iu {n-1}(\xi,\eta) d\eta d\xi\\
	&=\iint_{\tau(x,y)}\iu {n-1}(\xi,\eta)\, d\eta d\xi =\iu {n}(x,y),
\end{align*}
completing the induction on $N$.
\end{proof}
As detailed above, with Claim \ref{main_claim} proved, 
Theorem \ref{main_result} follows.
\end{proof}

\section{Local existence for nonlinear equations}\label{local_existence}
\begin{figure}
	\centering
	\includegraphics[width=11cm,height=7cm]{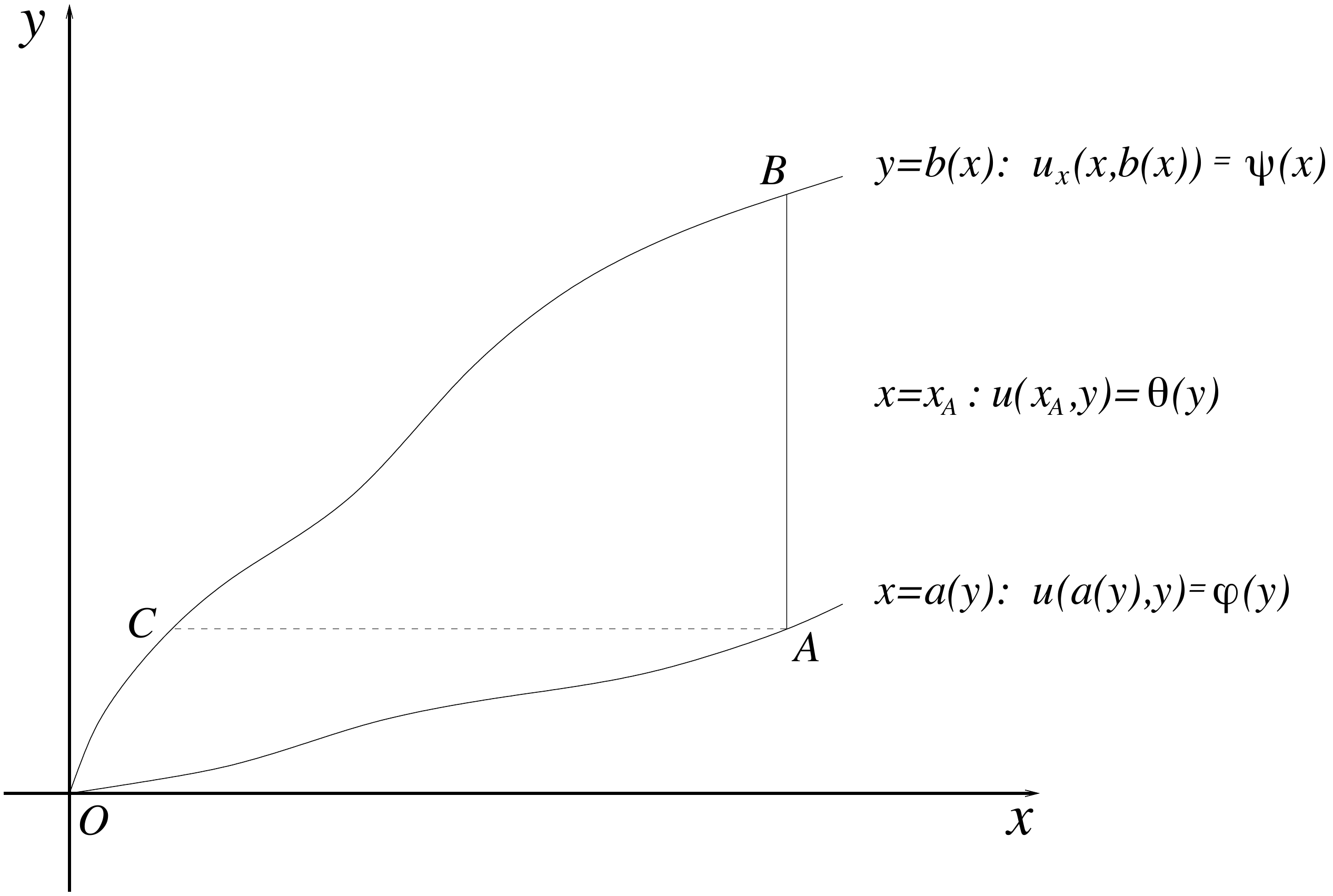}
	\caption{}
	\label{Figure_gen_u_data}
\end{figure}
In this section, we consider nonlinear equations of the form 
\beq\label{eqn1}
	u_{xy}=f(x,y,u,u_x,u_y),
\eeq
with $u$-data prescribed along a curve $M=\{x=a(y)\}$,
and with $u_x$-data prescribed along a curve $N=\{y=b(x)\}$.
We assume that their graphs are located as in Figure~\ref{Figure_gen_u_data}.
More precisely, we consider the following setup:
$A=(x_A,y_A)$ and $B=(x_B,y_B)$ are two points in the first 
quadrant, with $x_A=x_B$ and $y_A<y_B$, and 
\begin{itemize}
	\item[(H1)] $a\in C^1[0,y_A]$, $a$ is strictly increasing, 
	with $a(0)=0$ and $a(y_A)=x_A$;
	\item[(H2)] $b\in C^0[0,x_A]$, $b$ is strictly increasing, 
	with $b(0)=0$ and $b(x_A)=y_B$;
	\item[(H3)] $y<b(a(y))$ for all $y\in(0,y_A]$;
	\item[(H4)] $\vp\in C^1[0,x_A]$ and $\psi \in C^0[0,y_B]$.
\end{itemize}
For simplicity we make the following assumptions on the right-hand member 
$f$ in \eq{eqn1}:
\begin{itemize}
	\item[(H5)]  $f:\overline{OAB}\times \RR^3\to\RR$ is 
	continuous, bounded, and satisfies a uniform Lipschitz condition with
	respect to $v=(u,p,q)$:  {there exists $L>0$, such that}
	\beq\label{lip}
		|f(x,y,v)-f(x,y,\bar v)|\leq L|v-\bar v| 
	\eeq
	for all $(x,y)\in \overline{OAB}$ and all $v,\bar v\in \RR^3$. 
\end{itemize}
Here and below we use the notation
\[|v|=|u|+|p|+|q|\qquad \text{for $v=(u,p,q)\in\RR^3$.}\]
With these assumptions we now pose the following boundary value problem:
\begin{align}
 	u_{xy}(x,y)=f(x,y,u(x,y), u_x(x,y),u_y(x,y)) &
	\qquad\text{for $(x,y)\in\overline{OAB}$} \label{gen_pde}\\
 	u(a(y),y)=\vp(y) &
	\qquad\text{for $y\in [0,y_A]$} \label{gen_udata}\\
	u_x(x,b(x))=\psi(x) &
	\qquad\text{for $x\in [0,x_A]$.}\label{gen_uxdata}
\end{align}
As for the particular case treated in Section \ref{exist_uxy=u}, we shall
obtain a solution of \eq{gen_pde}-\eq{gen_udata}-\eq{gen_uxdata} by 
constructing a solution of the following modified problem:
\begin{align}
 	u_{xy}(x,y)=f(x,y,u(x,y), u_x(x,y),u_y(x,y)) 
	\qquad&\text{for $(x,y)\in\overline{OAB}$} \label{gen_pde'}\\
 	u(a(y),y)=\vp(y) 
	\qquad&\text{for $y\in [0,y_A]$} \label{gen_udata'}\\
	u_x(x,b(x))=\psi(x) 
	\qquad&\text{for $x\in [0,x_A]$}\label{gen_uxdata'}\\
	u(x_A,y)=\theta(y) 
	\qquad&\text{for $y\in [y_A,y_B]$,} \label{gen_udata''}
\end{align}
where $\theta(y)$ is a suitably chosen function defined on $[y_A,y_B]$. 
To obtain a solution of the latter problem, and hence of the original
problem \eq{gen_pde}-\eq{gen_udata}-\eq{gen_uxdata}, we 
will employ Picard iteration. The convergence of the iteration scheme 
will be obtained on a sufficiently small subregion of $\OAB$.

The reformulation of the PDE \eq{gen_pde'}
as an integral equation, and setting up an iteration scheme for the triple
$(u,u_x,u_y)$ is standard; see e.g., Section 21 in \cite{wal}. 
However, there is now an additional ``twist'' to this setup: differently from standard cases
treated in the literature, the choice of $\theta$ is now part of the problem.
Indeed, as we shall see, the presence 
of $u_x$ on the right-hand side of \eq{gen_pde'} forces us to 
consider a scheme which includes iteration of the function $\theta$
as well. 

Before formulating the iteration scheme we analyze the conditions 
that $\theta$ must satisfy in order to yield a classical $C^1$-solution 
to \eq{gen_pde'}-\eq{gen_udata'}-\eq{gen_uxdata'}-\eq{gen_udata''}.
By integrating \eq{gen_pde'}, first with respect to $x$ and then with respect to 
$y$, and making use of the boundary data 
\eq{gen_udata'}-\eq{gen_uxdata'}-\eq{gen_udata''},
we obtain that 
\beq\label{u_OAC}
	u(x,y)=\vp(y)-\int_a^{a(y)}\!\!\!\!\psi(\xi)\, d\xi
	+\int_x^{a(y)}\!\!\!\!\int_y^{b(\xi)}\!\!\!\! f(\xi,\eta,v(\xi,\eta))\, d\eta d\xi
	\quad\text{on $\OAC$,}
\eeq
and
\beq\label{u_ABC}
	u(x,y)=\theta(y)-\int_a^{x_A}\!\!\!\!\psi(\xi)\, d\xi
	+\int_x^{x_A}\!\!\!\!\int_y^{b(\xi)}\!\!\!\! f(\xi,\eta,v(\xi,\eta))\, d\eta d\xi
	\quad\text{on $\ABC$.}
\eeq
As in the proof of Lemma \ref{regularity} we obtain the following: 
in order that the expressions in \eq{u_OAC} and \eq{u_ABC} hold for a 
$C^1(\OAB)$ function $u$, it is necessary that $\theta$ belongs to $C^1[y_A,y_B]$ 
and satisfies
\begin{align}
	\theta(y_A)&=\vp(y_A)\qquad \label{theta_cond1}\\
	\theta'(y_A)&=\vp'(y_A)
	-a'(y_A) \Big[\psi(x_A)-\int_{y_A}^{y_B}\!\!\!\! f(x_A,\eta,v(x_A,\eta))\, d\eta\Big].
	\label{theta_cond2}
\end{align}
We need to make sure that the iteration scheme incorporates 
these conditions. For simplicity we shall use affine functions 
as iterates for $\theta$. We proceed as follows.

\subsection{Iteration scheme} 
\subsubsection{Base step} 
We start by fixing the affine function 
$\ith 0=\vp(y_A)+\sigma_0 (y-y_A)$ characterized by the conditions
\beq\label{cond-th0}
	\ith 0(y_A)=\vp(y_A) \qquad\text{and}\qquad 
	\ith {0}{}^\prime(y_A)=\sigma_0=\vp'(y_A)-{a'(y_A)}   \psi(x_A),
\eeq
and then set
\begin{align}
	\iu 0(x,y)&:=
	\begin{dcases} 
		\vp(y)-\int_x^{a(y)}\!\!  \psi(\xi)d\xi &\qquad \text{for $(x,y)\in \overline{OAC}$}  \\ 
		\ith 0(y)-\int_x^{x_A}\!\! \psi(\xi)d\xi &\qquad \text{for $(x,y)\in \overline{ABC}$,} 
	\end{dcases}\\
	\ip 0 (x,y)&:=\psi(x)\qquad \text{for $(x,y)\in \overline{OAB}$},\\
	\iq 0 (x,y)&:=
	\begin{dcases} 
		\vp'(y)-{a'(y)}   \psi(a(y)) &\qquad \text{for $(x,y)\in \overline{OAC}$}  \\ 
		\ith {0}{}^\prime(y) &\qquad \text{for $(x,y)\in \overline{ABC}$,} 
	\end{dcases}
\end{align}
The conditions in \eqref{cond-th0} ensure that $\iu 0$ and $\iq 0$ are defined consistently 
on $\overline{AC}=\overline{OAC}\cap \overline{ABC}$. It is immediate to verify that the 
following holds:
\begin{align*}
            &\iu 0_x(x,y)=\ip0(x,y)\qquad \text{for $(x,y)\in \overline{OAB}$,} \\
            &\iu 0_y(x,y)=\iq0(x,y)\qquad \text{for $(x,y)\in\overline{OAB}$,}\\
            &\iu 0(a(y),y)=\vp(y)\qquad \text{for $y\in [0,y_A]$,}\\
            &\iu 0(x_A,y)=\ith 0(y)\qquad\text{for $y\in [y_A,y_B]$,}\\ 
            &\ip 0(x, b(x))=\psi(x)\qquad \text{for $x\in [0, x_A]$,}\\
            &\iq 0(x_A, y)=\ith {0}{}^\prime(y)\qquad \text{for $y\in [0, y_A].$}
\end{align*}

\subsubsection{Iteration step}
For $n\geq0$, assume that $\iu n$, $\ip n$, $\iq n$, and $\ith n$ 
are continuous functions on $\overline{OAB}$ that satisfy
\begin{align}
            \label{cond-iunx}&\iu n_x(x,y) =\ip n(x,y)\qquad \text{for $(x,y)\in\overline{OAB}$,}\\
            \label{cond-iuny}&\iu n_y(x,y) =\iq n(x,y)\qquad \text{for $(x,y)\in \overline{OAB}$,}\\
            \label{cond-iun1}&\iu n(a(y),y)=\vp(y) \qquad \text{for $y\in [0,y_A]$,}\\
            \label{cond-iun2}&\iu n(x_A,y)=\ith n(y)\qquad \text{for $y\in [y_A,y_B]$,}\\ 
            \label{cond-ipn} &\ip n(x, b(x))=\psi(x) \qquad \text{for $x\in [0, x_A]$,}\\
            \label{cond-iqn} &\iq n(x_A, y)={\ith n}'(y) \qquad \text{for $y\in [0, y_A]$.}
 \end{align}
We first update the $u$-data along $\AB$ by letting $\ith {n+1}$ be the 
affine function characterized by
\begin{align} 
	\ith {n+1}(y_A)&=\vp(y_A)\label{cond-thn1}\\
	\ith {n+1}{}^\prime(y_A)&=\vp'(y_A)-{a'(y_A)}\Big[\psi(x_A)\nn\\
	&\qquad\qquad-\int_{y_A}^{y_B}\!\! 
	f\big(x_A,\eta, \ith n(\eta), \ip n(x_A,\eta),\ith n {}^\prime(\eta)\big)d\eta\Big].
	\label{cond-thn2}
\end{align}
In accordance with \eq{theta_cond1}-\eq{theta_cond2},
by using this $\ith {n+1}$ in the definitions of $\iu {n+1}$ and $\iq {n+1}$ below,
we guarantee continuity of the next iterate $\iv{n+1}$ across the horizontal 
line $\AC$. As remarked above, we note that the presence of $\ip n(x_A,\eta)$ 
in the integrand on the right-hand side of \eq{cond-thn2} (i.e., the presence of 
$u_x$ in the original PDE \eq{eqn1}), rules out the possibility of 
using a fixed function $\theta$ in all iteration steps.

We then update $\iv n=(\iu n,\ip n,\iq n)$ by setting 
\beq\label{eq-iun1}
	\iu {n+1}(x,y):= \vp(y)-\int_x^{a(y)} \!\!  \psi(\xi)d\xi 
	+\int_x^{a(y)} \!\!\!\! \int_y^{b(\xi)} \!\!  f\big(\xi,\eta,\iv n(\xi,\eta)\big)\, d\eta d\xi
\eeq
for $(x,y)\in \overline{OAC}$,  
\beq\label{eq-iun2} 
	\iu {n+1}(x,y):=   \ith {n+1}(y)-\int_x^{x_A}\!\!  \!\! \psi(\xi)d\xi
	+ \int_x^{x_A}\!\!\!\! \int_y^{b(\xi)}  \!\!\!\! f\big(\xi,\eta,\iv n (\xi,\eta)\big)\, d\eta d\xi
\eeq
for $(x,y)\in \overline{ABC}$;
\beq\label{eq-ipn} 
	\ip{n+1}(x,y):=\psi(x)-\int_y^{b(x)} \!\!\!\!  f\big(x,\eta,\iv n(x,\eta)\big)\, d\eta 
\eeq
for $(x,y)\in \overline{OAB}$;  
\begin{align}\label{eq-iqn1} 
	\iq {n+1}(x,y)&:= \vp'(y)-a'(y)\Big[\psi(a(y)) -\int_y^{b(a(y))} \!\!\!\!  
	f\big(a(y),\eta,\iv n(a(y),\eta)\big)d\eta \Big]\nn \\
	&\quad - \int_x^{a(y)} \!\!\!\!  f\big(\xi, y,\iv n(\xi,y)\big)d\xi
\end{align}
for $(x,y)\in \overline{OAC}$, and 
\beq\label{eq-iqn2}
	\iq {n+1}(x,y):=\ith {n+1}{}^\prime(y)-\int_x^{x_A} \!\!\!\!  f\big(\xi,y,\iv n(\xi,y)\big)\,d\xi
\eeq
for $(x,y)\in \overline{ABC}$.

As noted above, the conditions \eq{cond-thn1} and \eq{cond-thn2} ensure 
continuity of $\iu {n+1}$ and $\iq {n+1}$, respectively, across $\AC$.
It is immediate to verify that, with the definitions above, 
\eqref{cond-iunx}-\eqref{cond-iqn} are satisfied with $n$ replaced by $n+1$.

\subsection{Convergence}
We proceed to establish convergence of the sequence of iterates $\iv n=(\iu n,\ip n,\iq n)$. 
The goal is to show that they form a Cauchy sequence in $C^0(\overline{OAB};\RR^3)$ 
equipped with the norm
\[\|v\|=\|u\|+\|p\|+\|q\|\equiv \sup |u(x,y)|+\sup |p(x,y)|+\sup |q(x,y)|,\]
where the supremums are taken over $(x,y)\in \overline{OAB}$.
This will be established under the condition that the region 
$\OAB$ is sufficiently small. Set
\begin{align*}
	I&:=[y_A,y_B]\\
	l&:=x_A\\
	h&:=y_B\\
	\alpha&:=\text{area}(OAB)\leq l h \\
	\gamma&:=\underset {0\leq y\leq y_A}{\max}|a'(y)|.
\end{align*}
Fix $n\geq 0$. The first step is to estimate the difference between $\ith {n+1}$ and 
$\ith n$ on $[y_A,y_B]$. 
Denote the slope of the affine function $\ith n$ by $\sigma_n$; this  
is given by \eq{cond-th0}${}_2$ and \eq{cond-thn2}. Thus,
\[\ith n(y)=\vp(y_A)+\sigma_n (y-y_A)\qquad n\geq 0.\]
We thus have
\beq\label{ineq-thn}
	|(\ith {n+1}-\ith n)(y)|=|\sigma_{n+1}-\sigma_n|| y-y_A|<h|\sigma_{n+1}-\sigma_n|.
\eeq
For $n\geq 1$, \eqref{cond-thn2} together with the Lipschitz property \eq{lip} give
\begin{align}
	&|\sigma_{n+1}-\sigma_n| 
	=|{a'(y_A)} |\cdot 
	\int_{y_A}^{y_B}\big|f\big(x_A,\eta, \ith n(\eta), \ip n(x_A,\eta),\sigma_n\big)\nn \\
	&\qquad\qquad\qquad\qquad\qquad\qquad\quad
	-f\big(x_A,\eta, \ith {n-1}(\eta), \ip {n-1}(x_A,\eta),\sigma_{n-1}\big)\big|d\eta\nn\\
	&\leq \gamma L\int_{y_A}^{y_B}| (\ith {n}-\ith {n-1})(\eta)|
	+|(\ip {n}-\ip {n-1})(x_A,\eta)|
	+|\sigma_n-\sigma_{n-1}|\, d\eta\nn\\
	&\leq  \gamma Lh 
	\Big[\underset {\eta\in I}{\sup} |(\ith {n}-\ith {n-1})(\eta)|
	+\underset {\eta\in I}{\sup}|(\ip {n}-\ip {n-1})(x_A,\eta)|
	+|\sigma_n-\sigma_{n-1}|\Big]\nn\\
	&\leq  \gamma Lh  \Big[\|\iu {n}-\iu {n-1}\|+\|\ip {n}-\ip {n-1}\|
	+\|\iq {n}-\iq {n-1}\|\Big]\nn\\
	&=\gamma Lh\|\iv {n}-\iv {n-1}\|,\label{ineq-sign}
\end{align}
where for the last inequality we have used \eqref{cond-iun2} and \eqref{cond-iqn}.

For $n\geq 1$ we proceed with similar estimates for $\iu n$, $\ip n$, and $\iq n$. 
From \eqref{eq-iun1}, 
\beq\label{ineq-iun1} 
	|(\iu {n+1}-\iu {n})(x,y)|\leq L\alpha \|\iv {n}-\iv {n-1}\| 
\eeq
for $(x,y)\in \overline{OAC}$, while \eqref{eq-iun2} gives
\begin{align}
	|(\iu {n+1}-\iu {n})(x,y)| &\leq |(\ith {n+1}- \ith {n})(y)| +L \alpha \|\iv {n}-\iv {n-1}\| \nn\\
	&\leq (\gamma L h^2 +L \alpha)  \|\iv {n}-\iv {n-1} \| \label{ineq-iun2} 
\end{align}
for $(x,y)\in \overline{ABC}$,
where in the last inequality we have used \eqref{ineq-thn} and \eqref{ineq-sign}. 
Combining \eqref{ineq-iun1} and \eqref{ineq-iun2}  and using that $\alpha\leq lh$, we get f
\beq\label{ineq-iun} 
	\|\iu {n+1}-\iu {n}\| \leq Lh(\gamma h+l) \|\iv {n}-\iv {n-1} \|.
\eeq
Next, \eqref{eq-ipn} gives
\beq\label{ineq-ipn}
 	|(\ip{n+1}-\ip{n})(x,y)|\leq L h  \|\iv {n}-\iv {n-1} \|
\eeq
for all $(x,y)\in  \overline{OAB}$, \eqref{eq-iqn1} gives
\beq\label{ineq-iqn1} 
	|(\iq {n+1}-\iq {n})(x,y)|\leq Lh\gamma ||\iv {n}-\iv {n-1} || +Ll\|\iv {n}-\iv {n-1}\|
\eeq
for $(x,y)\in\overline{OAC}$, and \eqref{eq-iqn2} gives
\begin{align} 
	|(\iq {n+1}-\iq {n})(x,y)|&\leq | (\sigma_{n+1}- \sigma_n)(y)| +Ll \|\iv {n}-\iv {n-1}\|\nn \\
	 &\leq (\gamma L h +L l)  \|\iv {n}-\iv {n-1} \| \label{ineq-iqn2}
\end{align}
for $(x,y)\in\overline{ABC}$, where in the last inequality we have used \eqref{ineq-sign}.
From \eqref{ineq-iqn1} and \eqref{ineq-iqn2}, we obtain 
\beq\label{ineq-iqn} 
	|(\iq {n+1}-\iq {n})(x,y)|\leq L( \gamma h + l) \|\iv {n}-\iv {n-1}\|. 
\eeq
for all $(x,y)\in \overline{OAB}$.

Finally, combining \eqref{ineq-iun},  \eqref{ineq-ipn}, and  \eqref{ineq-iqn}, we obtain
\[\|\iv {n+1}-\iv {n} ||\leq \mu (L,\gamma,h,l)||\iv {n}-\iv {n-1}\|,\]
where $ \mu (L,\gamma,h,l)=L(2\gamma h+2l+h)$. We then choose $l$, and hence $h$, 
sufficiently small, so that $\mu<1$.  It follows that  the sequence $(\iv n)=(\iu n,\ip n,\iq n)$ 
is Cauchy in $C^0(\overline{OAB};\RR^3)$, and thus converges uniformly to a continuous 
functions $v=(u,p,q):\overline{OAB}\to \RR^3$. As $(\iv n)$ is Cauchy it follows from 
\eq{ineq-thn} that $(\sigma_n)$ is a Cauchy sequence of real numbers. Recalling that the
$\sigma_n$ are the slopes of the affine functions $\ith n:[y_A,y_B]\to \RR$, we obtain that 
$(\ith n)$ is Cauchy in $C^0[y_A,y_B]$; let its limit be $\theta$. As a uniform limit of 
affine functions with value $\vp(y_A)$ at $y=y_A$, $\theta$ is itself affine, 
$\theta(y)=\sigma(y-y_A)+\vp(y_A)$, where 
$\sigma$ is the limit of $(\sigma_n)$.

Thanks to uniform convergences $\iv n\to v$ and $\ith n \to \theta$, sending $n\to\infty$ in 
\eq{cond-thn1}-\eq{eq-iun2}, \eq{eq-ipn}, \eq{eq-iqn1}-\eq{eq-iqn2}
yields the corresponding equations with upper indices $(n)$ and $(n+1)$ removed.
It is an immediate consequence of the resulting identities that $p=u_x$ and 
$q=u_y$ on $\OAB$.
In particular, we obtain that
\beq\label{eq-int1}
	u (x,y)= \vp(y)-\int_x^{a(y)} \!\!\!\! \psi(\xi)d\xi 
	+\int_x^{a(y)} \!\!\!\! \int_y^{b(\xi)} \!\!\!\!  f\left(\xi,\eta,(u,u_x,u_y)(\xi,\eta)\right)d\eta \,d\xi
\eeq
for $(x,y)\in \overline{OAC}$, and
\beq \label{eq-int2}
	u(x,y)=   \theta(y)-\int_x^{x_A}  \!\!\!\! \psi(\xi)d\xi
	+ \int_x^{x_A} \!\!\!\! \int_y^{b(\xi)} \!\!\!\!  f\left(\xi,\eta,(u,u_x,u_y)(\xi,\eta)\right)d\eta \,d\xi
\eeq
for $(x,y)\in \overline{ABC}$. Since $\theta$ satisfies \eq{cond-thn1} and \eq{cond-thn2}  
with upper indices $(n)$ and $(n+1)$ removed, it follows 
from these identities that $u$, $u_x$, $u_y$, and $u_{xy}$
exist and are continuous in $\OAB$, that 
$u_{xy}(x,y)= f\left(x,y,(u,u_x,u_y)(x,y)\right)$ for $(x,y)\in OAB$, $u(y,(a(y))=\vp(y)$
for $y\in[0,y_A]$, and that $u_x(x,b(x))=\psi(x)$ for $x\in[0,x_A]$.
This concludes the proof of local existence for the mixed boundary value problem 
\eq{gen_pde}-\eq{gen_udata}-\eq{gen_uxdata}.

Combining our findings above with those from Section \ref{non_uniqueness} we have 
the following theorem.
\begin{theorem}\label{summing_up}
	With assumptions (H1), (H2), (H3), (H4), and (H5) above, consider the mixed 
	boundary value problem \eq{gen_pde}-\eq{gen_udata}-\eq{gen_uxdata} 
	on the region $\OAB$ as described above; see Figure~\ref{Figure_gen_u_data}.
	Then, for $\OAB$ sufficiently small, this problem has a solution 
	$u$ with the properties that $u$, $u_x$, $u_y$, and $u_{xy}$ are 
	continuous functions on $\overline{OAB}$. The solution is in general 
	not unique. 
\end{theorem}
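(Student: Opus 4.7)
The statement has two components, existence and non-uniqueness, and the strategy for each is already essentially contained in the preceding sections; the proof mostly amounts to assembling these pieces.

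For the existence part, the plan is to run the Picard-type iteration scheme of Subsection on the iteration scheme above: starting from the base iterate $(\iu 0,\ip 0,\iq 0,\ith 0)$ defined piecewise on $\overline{OAC}$ and $\overline{ABC}$, and then updating $\ith{n+1}$ via \eq{cond-thn1}-\eq{cond-thn2} before updating $(\iu{n+1},\ip{n+1},\iq{n+1})$ via \eq{eq-iun1}-\eq{eq-iqn2}. By construction \eqref{cond-iunx}-\eqref{cond-iqn} propagate from step $n$ to step $n+1$, so each iterate is a continuous $(u,u_x,u_y)$-triple on $\overline{OAB}$ satisfying the required boundary identifications, including $C^1$-matching across $\AC$ enforced by \eq{cond-thn1}-\eq{cond-thn2}. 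The convergence argument is then the one carried out above: using the Lipschitz condition (H5), the difference estimates \eqref{ineq-thn}, \eqref{ineq-sign}, \eqref{ineq-iun}, \eqref{ineq-ipn}, \eqref{ineq-iqn} yield the contraction inequality
\[
\|\iv{n+1}-\iv{n}\|\leq \mu(L,\gamma,h,l)\,\|\iv n-\iv{n-1}\|,\qquad \mu=L(2\gamma h+2l+h).
\]
Choosing $x_A=l$ (and hence $h=y_B$) small enough that $\mu<1$, the sequences $(\iv n)$ and $(\ith n)$ are Cauchy in $C^0$, converge uniformly to continuous limits $v=(u,p,q)$ and $\theta$, and passing to the limit in \eq{eq-iun1}-\eq{eq-iqn2} gives the integral identities \eq{eq-int1}-\eq{eq-int2}. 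Differentiating these identifies $p=u_x$, $q=u_y$, and $u_{xy}=f(x,y,u,u_x,u_y)$ in $\OAB$, while the traces along $M$ and $N$ reproduce \eq{gen_udata} and \eq{gen_uxdata}, establishing a $C^1$-solution with continuous $u_{xy}$ on $\overline{OAB}$.

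For the non-uniqueness part, the plan is simply to exhibit one admissible instance of the setup where two distinct solutions exist; Theorem \ref{main_result} provides exactly such an instance. Take $f(x,y,u,p,q)\equiv u$ (which trivially satisfies (H5) on any compact region after smooth truncation in $(u,p,q)$ to preserve boundedness and the Lipschitz condition), choose $a(y)=ay$ and $b(x)=bx$ with $ab>1$ (so that (H1)-(H3) hold), and take $\vp\equiv 0$, $\psi\equiv 0$ (so that (H4) holds). Then $u\equiv 0$ is one solution of \eq{gen_pde}-\eq{gen_udata}-\eq{gen_uxdata}, while Theorem \ref{main_result} produces a second solution $u_\theta$ which is strictly positive throughout the open triangle $OAC$, contradicting uniqueness on any neighborhood of the origin inside $\OAB$.

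The main obstacle, if any, is purely bookkeeping: one has to verify that the solution produced by the existence scheme in Section \ref{local_existence}, when specialized to the data of Theorem \ref{main_result}, indeed falls within the class of ``solutions'' for which the non-uniqueness result is claimed. This is straightforward because in that specialization the additional affine data $\ith n$ generated by the iteration is consistent with a $\theta$ satisfying \eq{0_theta_conds}, so both $u\equiv 0$ and the non-trivial $u_\theta$ are admissible limits of the scheme for different initial choices of the updating data. Once this is noted, combining the two parts gives the theorem.
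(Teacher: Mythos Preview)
Your proposal is correct and follows exactly the paper's approach: the theorem is stated as a summary, with existence drawn from the iteration and contraction argument of Section~\ref{local_existence} and non-uniqueness drawn from Theorem~\ref{main_result}. One minor point: your final paragraph introduces an unnecessary worry, since non-uniqueness only requires exhibiting two distinct solutions of \eq{gen_pde}-\eq{gen_udata}-\eq{gen_uxdata}, not that both arise as limits of the adaptive scheme of Section~\ref{local_existence}; the trivial solution $u\equiv 0$ and the $u_\theta$ of Theorem~\ref{main_result} (constructed via the fixed-$\theta$ scheme of Section~\ref{exist_uxy=u}) already suffice.
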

\vskip5mm
{\bf Acknowledgment:} H.~K.~Jenssen was partially supported by NSF grants
DMS-1311353 and  DMS-1813283. I.~A.~Kogan was partially supported by NSF  DMS-1311743.

\begin{bibdiv}
\begin{biblist}

\bib{azdz}{article}{
   author={Aziz, A. K.},
   author={Diaz, J. B.},
   title={On a mixed boundary-value problem for linear hyperbolic partial
   differential equations in two independent variables},
   journal={Arch. Rational Mech. Anal.},
   volume={10},
   date={1962},
   pages={1--28},
   issn={0003-9527},
   review={\MR{0142909}},
   doi={10.1007/BF00281176},
}
\bib{bjk}{article}{
   author={Benfield, M.},
   author={Jenssen, H.K.},
   author={Kogan, I.A.}
   title={A Generalization of an Integrability Theorem of Darboux},
   journal={J. Geom. Anal.},
   date={2018},
}
\bib{cil}{article}{
   author={Ciliberto, Carlo},
   title={Su alcuni problemi relativi ad una equazione di tipo iperbolico in
   due variabili},
   language={Italian},
   journal={Boll. Un. Mat. Ital. (3)},
   volume={11},
   date={1956},
   pages={383--393},
   review={\MR{0087863}},
}
\bib{ch}{book}{
   author={Courant, R.},
   author={Hilbert, D.},
   title={Methods of mathematical physics. Vol. II},
   series={Wiley Classics Library},
   note={Partial differential equations;
   Reprint of the 1962 original;
   A Wiley-Interscience Publication},
   publisher={John Wiley \& Sons, Inc., New York},
   date={1989},
   pages={xxii+830},
   isbn={0-471-50439-4},
   review={\MR{1013360}},
}
\bib{darb1}{book}{
   author={Darboux, Gaston},
   title={Le\c{c}ons sur la th\'{e}orie g\'{e}n\'{e}rale des surfaces. II},
   language={French},
   series={Les Grands Classiques Gauthier-Villars. [Gauthier-Villars Great
   Classics]},
   note={
   Les congruences et les \'{e}quations lin\'{e}aires aux d\'{e}riv\'{e}es partielles. Les
   lignes trac\'{e}es sur les surfaces. [Congruences and linear partial
   differential equations. Lines traced on surfaces];
   Reprint of the second (1915) edition;
   Cours de G\'{e}om\'{e}trie de la Facult\'{e} des Sciences. [Course on Geometry of the
   Faculty of Science]},
   publisher={\'{E}ditions Jacques Gabay, Sceaux},
   date={1993},
   isbn={2-87647-016-0},
   review={\MR{1324110}},
}
\bib{darb2}{book}{
   author={Darboux, Gaston},
   title={Le\c{c}ons sur la th\'{e}orie g\'{e}n\'{e}rale des surfaces. IV},
   language={French},
   series={Les Grands Classiques Gauthier-Villars. [Gauthier-Villars Great
   Classics]},
   note={
   D\'{e}formation infiniment petite et repr\'{e}sentation sph\'{e}rique. [Infinitely
   small deformation and spherical representation];
   Reprint of the 1896 original;
   Cours de G\'{e}om\'{e}trie de la Facult\'{e} des Sciences. [Course on Geometry of the
   Faculty of Science]},
   publisher={\'{E}ditions Jacques Gabay, Sceaux},
   date={1993},
   isbn={2-87647-016-0},
   review={\MR{1365962}},
}
\bib{dib}{book}{
   author={DiBenedetto, Emmanuele},
   title={Partial differential equations},
   series={Cornerstones},
   edition={2},
   publisher={Birkh\"{a}user Boston, Inc., Boston, MA},
   date={2010},
   pages={xx+389},
   isbn={978-0-8176-4551-9},
   review={\MR{2566733}},
   doi={10.1007/978-0-8176-4552-6},
}
\bib{gar}{book}{
   author={Garabedian, P. R.},
   title={Partial differential equations},
   note={Reprint of the 1964 original},
   publisher={AMS Chelsea Publishing, Providence, RI},
   date={1998},
   pages={xii+672},
   isbn={0-8218-1377-3},
   review={\MR{1657375}},
}
\bib{gour}{book}{
   author={Goursat, \'{E}douard},
   title={Cours d'analyse math\'{e}matique. Tome III},
   language={French},
   series={Les Grands Classiques Gauthier-Villars. [Gauthier-Villars Great
   Classics]},
   edition={3},
   note={Int\'{e}grales infiniment voisines. \'{E}quations aux d\'{e}riv\'{e}es du second
   ordre. \'{E}quations int\'{e}grales. Calcul des variations. [Infinitely near
   integrals. Second-order partial differential equations. Integral
   equations. Calculus of variations]},
   publisher={\'{E}ditions Jacques Gabay, Sceaux},
   date={1992},
   pages={v+704},
   isbn={2-87647-032-2},
   review={\MR{1296666}},
}
\bib{khar}{article}{
   author={Kharibegashvili, S.},
   title={Goursat and Darboux type problems for linear hyperbolic partial
   differential equations and systems},
   language={English, with English and Georgian summaries},
   journal={Mem. Differential Equations Math. Phys.},
   volume={4},
   date={1995},
   pages={127},
   issn={1512-0015},
   review={\MR{1415805}},
}
\bib{kj}{article}{
   author={Kharibegashvili, S. S.},
   author={Jokhadze, O. M.},
   title={On the solvability of a boundary value problem for nonlinear wave
   equations in angular domains},
   note={Translation of Differ. Uravn. {\bf 52} (2016), no. 5, 665--686},
   journal={Differ. Equ.},
   volume={52},
   date={2016},
   number={5},
   pages={644--666},
   issn={0012-2661},
   review={\MR{3541458}},
   doi={10.1134/S0012266116050104},
}
\bib{lieb}{book}{
   author={Lieberstein, H. Melvin},
   title={Theory of partial differential equations},
   note={Mathematics in Science and Engineering, Vol. 93},
   publisher={Academic Press, New York-London},
   date={1972},
   pages={xiv+283},
   review={\MR{0355280}},
}
\bib{nak}{article}{
   author={Nakhushev, A. M.},
   title={Goursat problem},
   journal={Encyclopedia of Mathematics, 
   https://www.encyclopediaofmath.org/index.php/Goursat${}_-{}$problem, accessed May 2019},
}
\bib{ri_coll}{collection}{
   author={Riemann, Bernhard},
   title={Collected papers},
   note={Translated from the 1892 German edition by Roger Baker, Charles
   Christenson and Henry Orde},
   publisher={Kendrick Press, Heber City, UT},
   date={2004},
   pages={x+555},
   isbn={0-9740427-2-2},
   isbn={0-9740427-3-0},
   review={\MR{2121437}},
}
\bib{pic}{book}{
   author={Picard, \'{E}mile},
   title={Le\c{c}ons sur quelques \'{e}quations fonctionnelles avec des applications
   \`a divers probl\`emes d'analyse et de physique math\'{e}matique. R\'{e}dig\'{e}es par
   Eug\`ene Blanc},
   language={French},
   publisher={Gauthier-Villars, Paris},
   date={1950},
   pages={iii+187},
   review={\MR{0033960}},
}
\bib{pog}{book}{
   author={Pogorzelski, W.},
   title={Integral equations and their applications. Vol. I},
   series={Translated from the Polish by Jacques J. Schorr-Con, A. Kacner
   and Z. Olesiak. International Series of Monographs in Pure and Applied
   Mathematics, Vol. 88},
   publisher={Pergamon Press, Oxford-New York-Frankfurt; PWN-Polish
   Scientific Publishers, Warsaw},
   date={1966},
   pages={xv+714pp},
   review={\MR{0201934}},
}
\bib{szm1}{article}{
   author={Szmydt, Z.},
   title={Sur l'existence de solutions de certains nouveaux probl\`emes pour
   un syst\`eme d'\'{e}quations diff\'{e}rentielles hyperboliques du second ordre \`a
   deux variables ind\'{e}pendantes},
   language={French},
   journal={Ann. Polon. Math.},
   volume={4},
   date={1957},
   pages={40--60},
   issn={0066-2216},
   review={\MR{0094568}},
   doi={10.4064/ap-4-1-40-60},
}
\bib{szm2}{article}{
   author={Szmydt, Z.},
   title={Sur l'existence d'une solution unique de certains probl\`emes pour
   un syst\`eme d'\'{e}quations diff\'{e}rentielles hyperboliques du second ordre \`a
   deux variables ind\'{e}pendantes},
   language={French},
   journal={Ann. Polon. Math.},
   volume={4},
   date={1958},
   pages={165--182},
   issn={0066-2216},
   review={\MR{0094569}},
   doi={10.4064/ap-4-2-165-182},
}
\bib{wal}{book}{
   author={Walter, Wolfgang},
   title={Differential and integral inequalities},
   series={Translated from the German by Lisa Rosenblatt and Lawrence
   Shampine. Ergebnisse der Mathematik und ihrer Grenzgebiete, Band 55},
   publisher={Springer-Verlag, New York-Berlin},
   date={1970},
   pages={x+352},
   review={\MR{0271508}},
}

\end{biblist}
\end{bibdiv}

\end{document}